\documentclass[12pt]{article}

\usepackage{amsmath}
\usepackage{graphicx}
\usepackage{ upgreek }
\usepackage{ wasysym }
\usepackage{amsthm}
\usepackage{ amssymb }
\usepackage{ dsfont }
\usepackage{array}
\usepackage{multirow}
\usepackage{ gensymb }
\usepackage{subfig}
\usepackage[margin=1in]{geometry}
\usepackage{setspace}
\usepackage[T1]{fontenc}
\usepackage{mathptmx}
\usepackage{mathtools}
\usepackage{relsize}
\usepackage{placeins}
%\usepackage{fancyhdr}
%\pagestyle{fancy}
%\renewcommand{\headrulewidth}{0pt}
%\renewcommand{\footrulewidth}{0pt}
%\rfoot{\thepage}
%\cfoot{}
%\lfoot{}
%\lhead{}
%\chead{}
%\rhead{}

\usepackage[format=plain, font=footnotesize]{caption}
\captionsetup[subfloat]{labelformat=parens, labelsep=space}
\captionsetup[figure]{labelformat=simple, labelsep=colon}

\newtheorem{theorem}{Theorem}

\newtheorem{definition}{Definition}
\newtheorem{lemma}{Lemma}

\newtheorem*{corollary}{Corollary}

\makeatletter
% smallmatrix with right alignment
\newenvironment{rsmallmatrix}{\null\,\vcenter\bgroup
  \Let@\restore@math@cr\default@tag
  \baselineskip6\ex@ \lineskip1.5\ex@ \lineskiplimit\lineskip
  \ialign\bgroup\hfil$\m@th\scriptstyle##$&&\thickspace\hfil
  $\m@th\scriptstyle##$\crcr
}{%
  \crcr\egroup\egroup\,%
}
\makeatother

\newcommand{\appropto}{\mathrel{\vcenter{ 
  \offinterlineskip\halign{\hfil$##$\cr 
    \propto\cr\noalign{\kern2pt}\sim\cr\noalign{\kern-2pt}}}}} 
    
\title{Apollonian Equilateral Triangles}
\author{Christina Chen, Nan Li}
\date{December 29, 2012}
%\doublespacing

\begin{document}

\maketitle%\vspace{-2cm}
\begin{abstract}
Given an equilateral triangle with $a$ the square of its side length and a point in its plane with $b$, $c$, $d$ the squares of the distances from the point to the vertices of the triangle, it can be computed that $a$, $b$, $c$, $d$ satisfy $3(a^2+b^2+c^2+d^2)=(a+b+c+d)^2$.  This paper derives properties of quadruples of nonnegative integers $(a,\, b,\, c,\, d)$, called triangle quadruples, satisfying this equation.  It is easy to verify that the operation generating $(a,\, b,\, c,\, a+b+c-d)$ from $(a,\, b,\, c,\, d)$ preserves this feature and that it and analogous ones for the other elements can be represented by four matrices.  We examine in detail the triangle group, the group with these operations as generators, and completely classify the orbits of quadruples with respect to the triangle group action.  We also compute the number of triangle quadruples generated after a certain number of operations and approximate the number of quadruples bounded by characteristics such as the maximal element.  Finally, we prove that the triangle group is a hyperbolic Coxeter group and derive information about the elements of triangle quadruples by invoking Lie groups.  We also generalize the problem to higher dimensions.
\end{abstract}
%\newpage
\section{Introduction}\FloatBarrier
The study of Apollonian circle packings is an elegant geometric subject that generates deep questions in the theory of hyperbolic Coxeter groups, Lie theory, and analytic number theory, and many interesting problems about these packings can be effectively
 solved with state-of-the-art techniques from these fields.  For example, \cite{Graham1}, \cite{Graham2}, and other papers in the same series by these authors derive a large number of properties of the Apollonian group and Apollonian quadruples from group theory, number theory, and geometry.  The case of Apollonian equilateral triangles is a similar but less well-studied topic, and the goal of the project was to answer natural questions about this case with advanced techniques in group theory and number theory.
 
Formally, an Apollonian circle packing is a fractal generated from triples of mutually tangent circles.  In such a packing, for any four mutually tangent circles, their curvatures, $a$, $b$, $c$, $d$, satisfy Descartes's equation, $2(a^2+b^2+c^2+d^2)=(a+b+c+d)^2$.  Note that for fixed $a$, $b$, $c$, exactly two values of $d$ satisfy the equation, generating two possible configurations, as illustrated in Figure~\ref{apolloniancase}.  Given the configuration in Figure~\ref{d}, additional circles can be inscribed in each lune successively, a process illustrated in Figure~\ref{apollonianoperation}.  In addition, it is clear that given four mutually tangent circles with curvatures $a$, $b$, $c$, $d$, the curvature of the new circle inscribed in the lune bounded by the circles with curvatures $a$, $b$, $c$ is $2(a+b+c)-d$, which is easily derived from Descartes's equation.

  \begin{figure}
  \centering
  \subfloat[If the fourth circle is internally tangent to the other three, then its curvature $d$ is negative.]{\label{d}\includegraphics[trim=-2.8cm 0cm -1.2cm 0cm,clip=true,totalheight=0.3\textwidth]{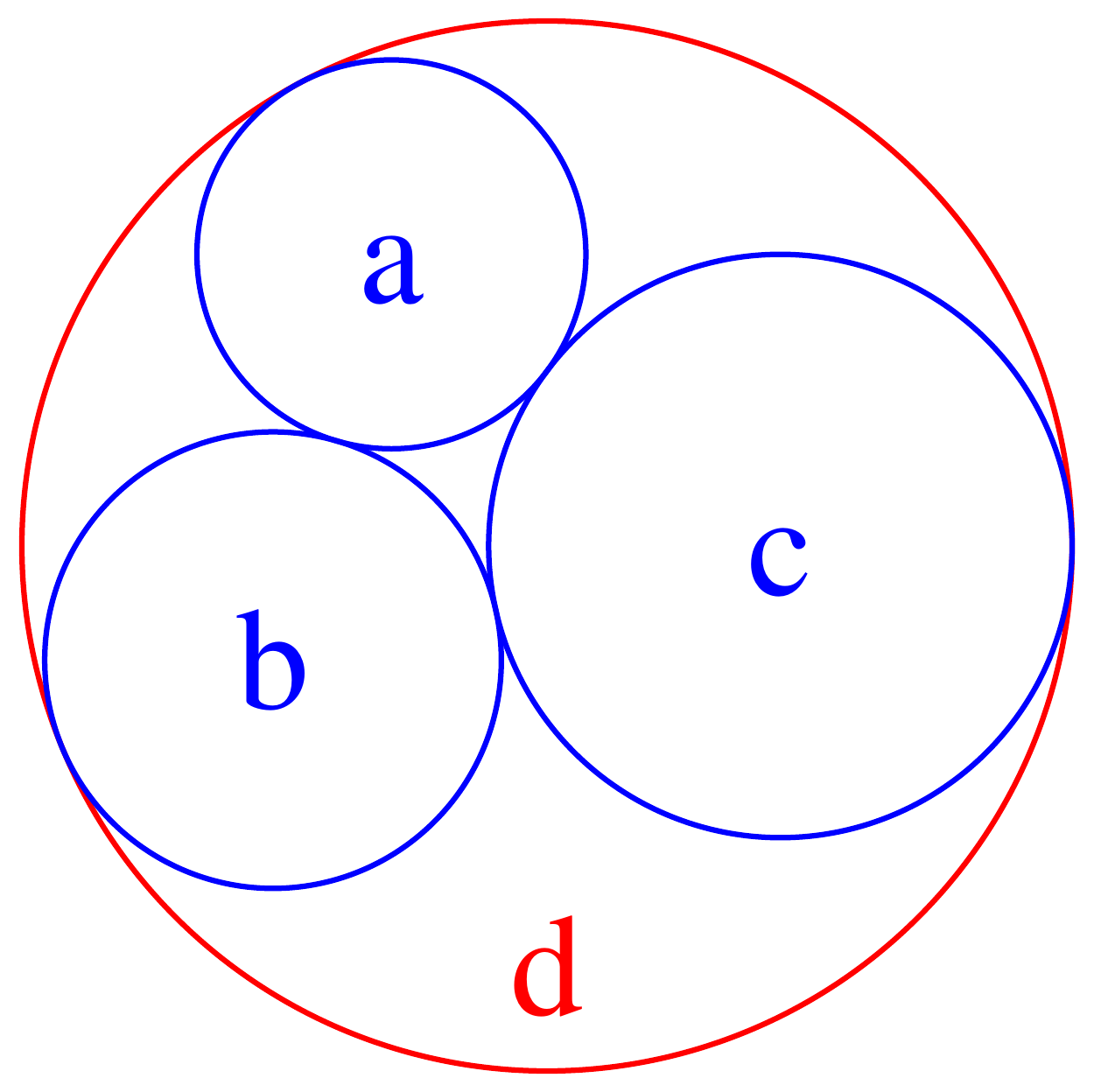}}
  ~\hspace{0.4cm}
  \subfloat[If the fourth circle is externally tangent to the other three, then its curvature $d'$ is positive.]{\label{d'}\includegraphics[trim=-1.5cm 0cm -0.7cm 0cm,clip=true,totalheight=0.3\textwidth]{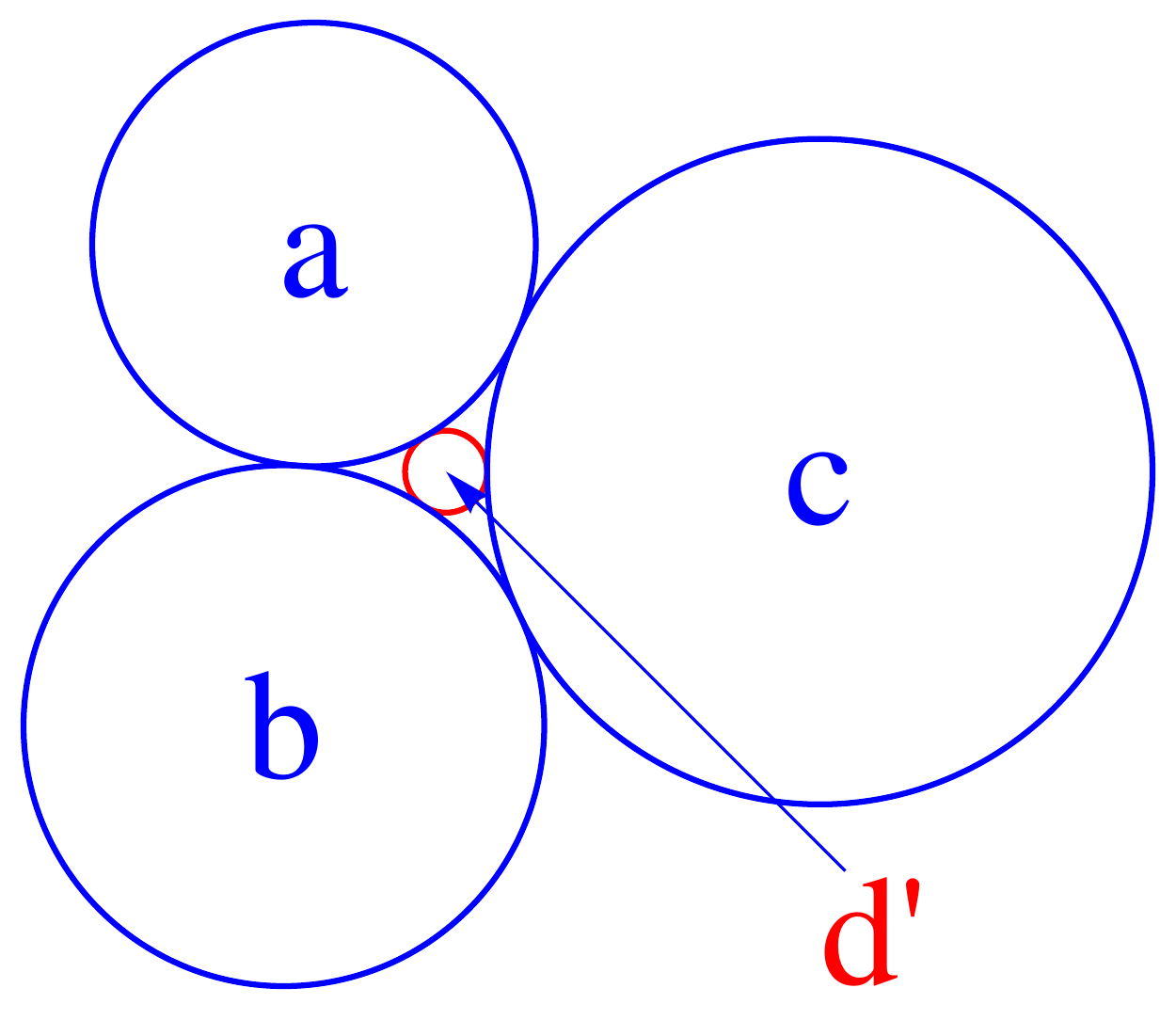}}
  \caption{There are two ways in which a fourth circle can be packed, and the two curvatures are related according to $d+d'=2(a+b+c)$.}
  \label{apolloniancase}
\end{figure}
  
  \begin{figure}
  \centering
  \subfloat{\includegraphics[trim=0cm -1cm 0cm 0cm,clip=true,totalheight=0.16\textwidth]{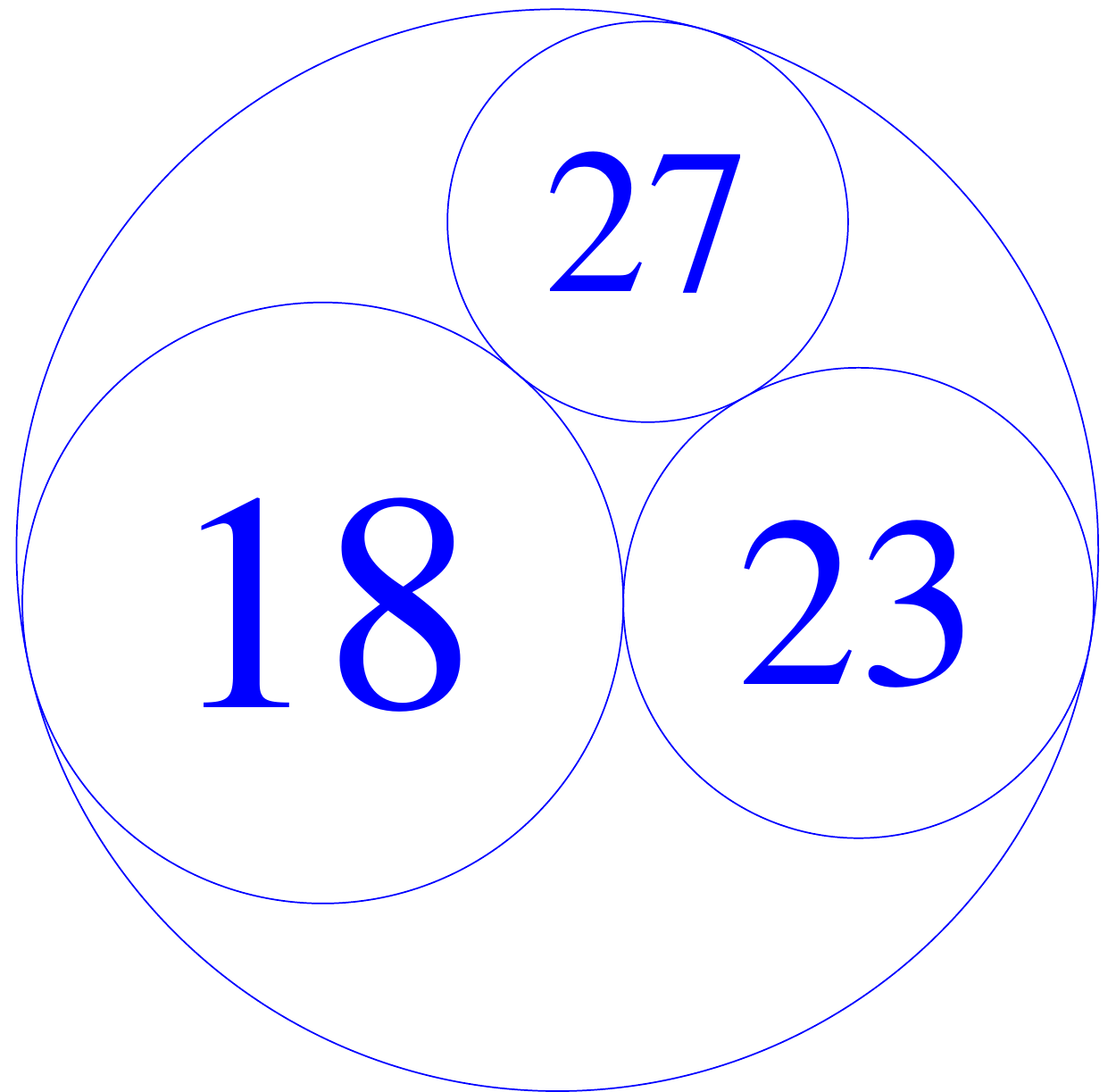}}
  ~\hspace{0.01mm}
  \subfloat{\includegraphics[trim=5cm 4cm 0cm 4cm,clip=true,totalheight=0.16\textwidth]{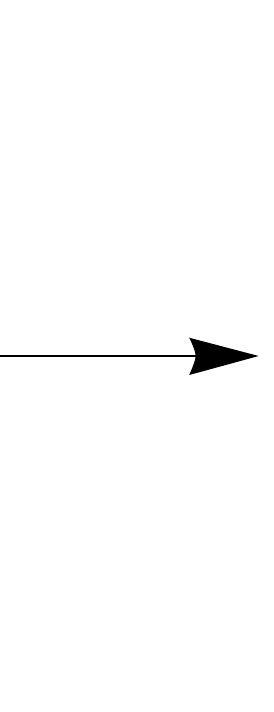}}
  ~\hspace{0.01mm}
  \subfloat{\includegraphics[trim=0cm -1cm 0cm 0cm,clip=true,totalheight=0.16\textwidth]{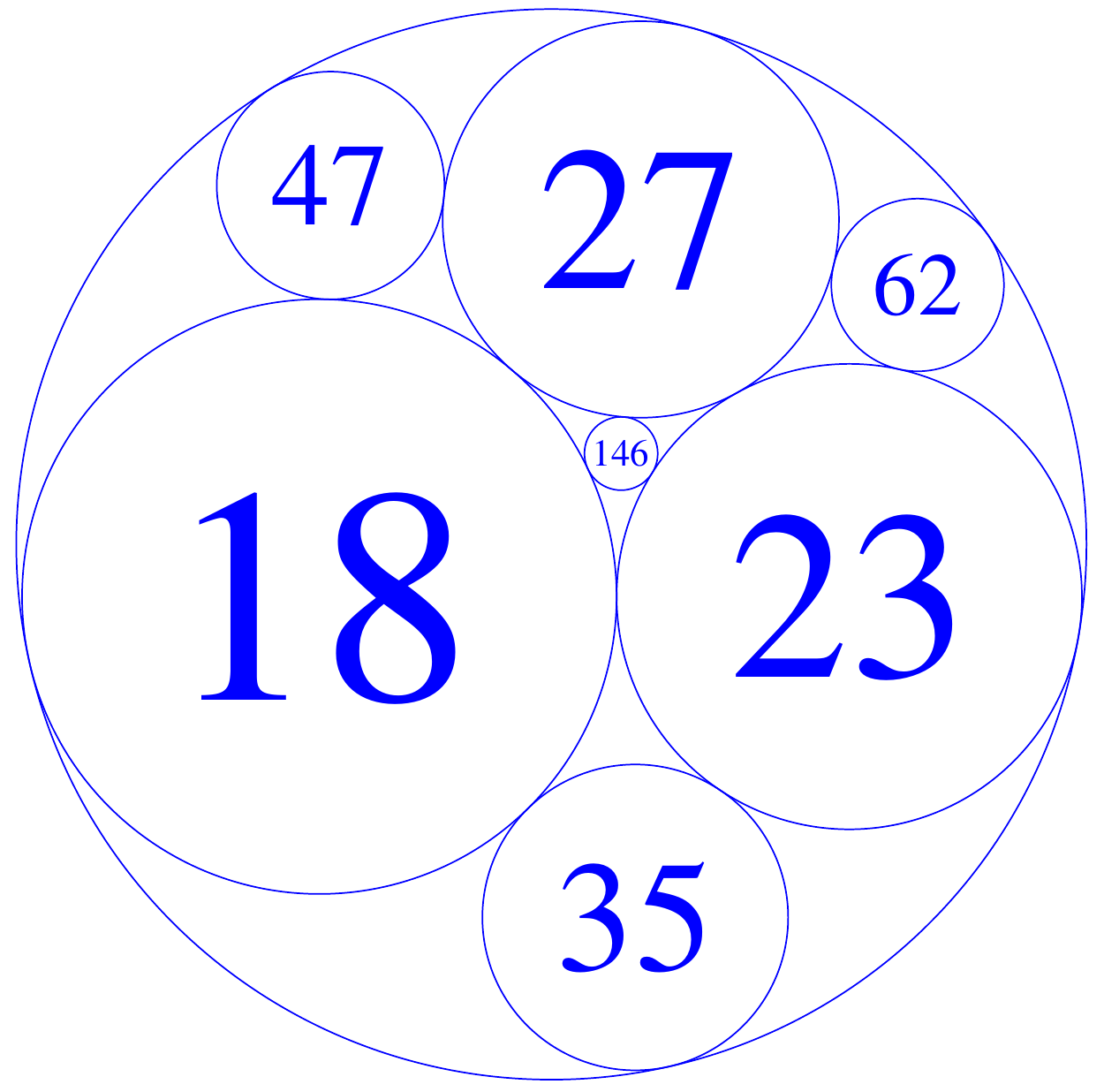}}
  ~\hspace{0.01mm}
  \subfloat{\includegraphics[trim=5cm 4cm 0cm 4cm,clip=true,totalheight=0.16\textwidth]{arrow3.pdf}}
  ~\hspace{0.01mm}
  \subfloat{\includegraphics[trim=0cm -1cm 0cm 0cm,clip=true,totalheight=0.16\textwidth]{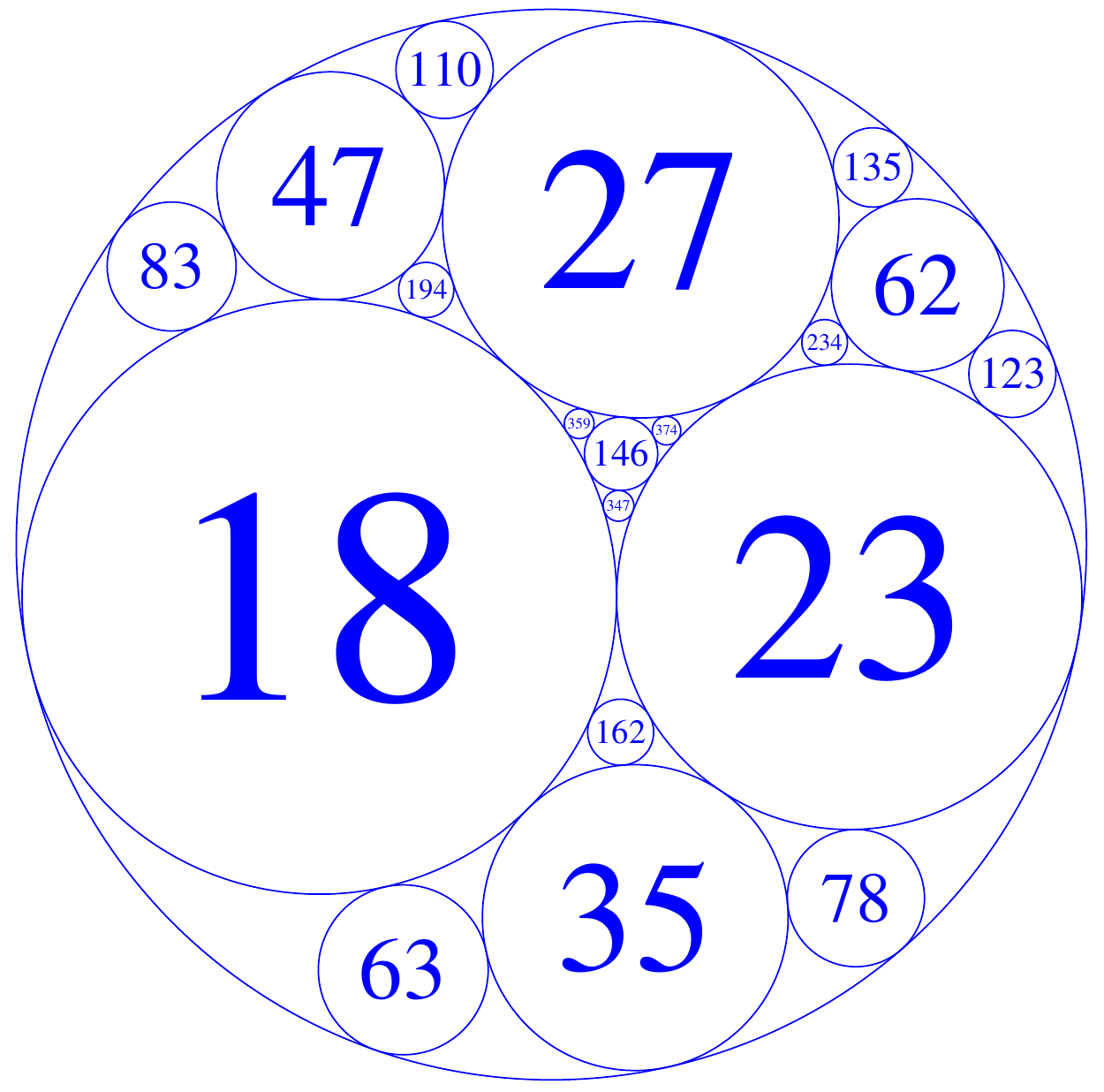}}~\hspace{0.01mm}
  \subfloat{\includegraphics[trim=5cm 4cm 0cm 4cm,clip=true,totalheight=0.16\textwidth]{arrow3.pdf}}
  ~\hspace{0.01mm}
  \subfloat{\includegraphics[trim=0cm -1cm 0cm 0cm,clip=true,totalheight=0.16\textwidth]{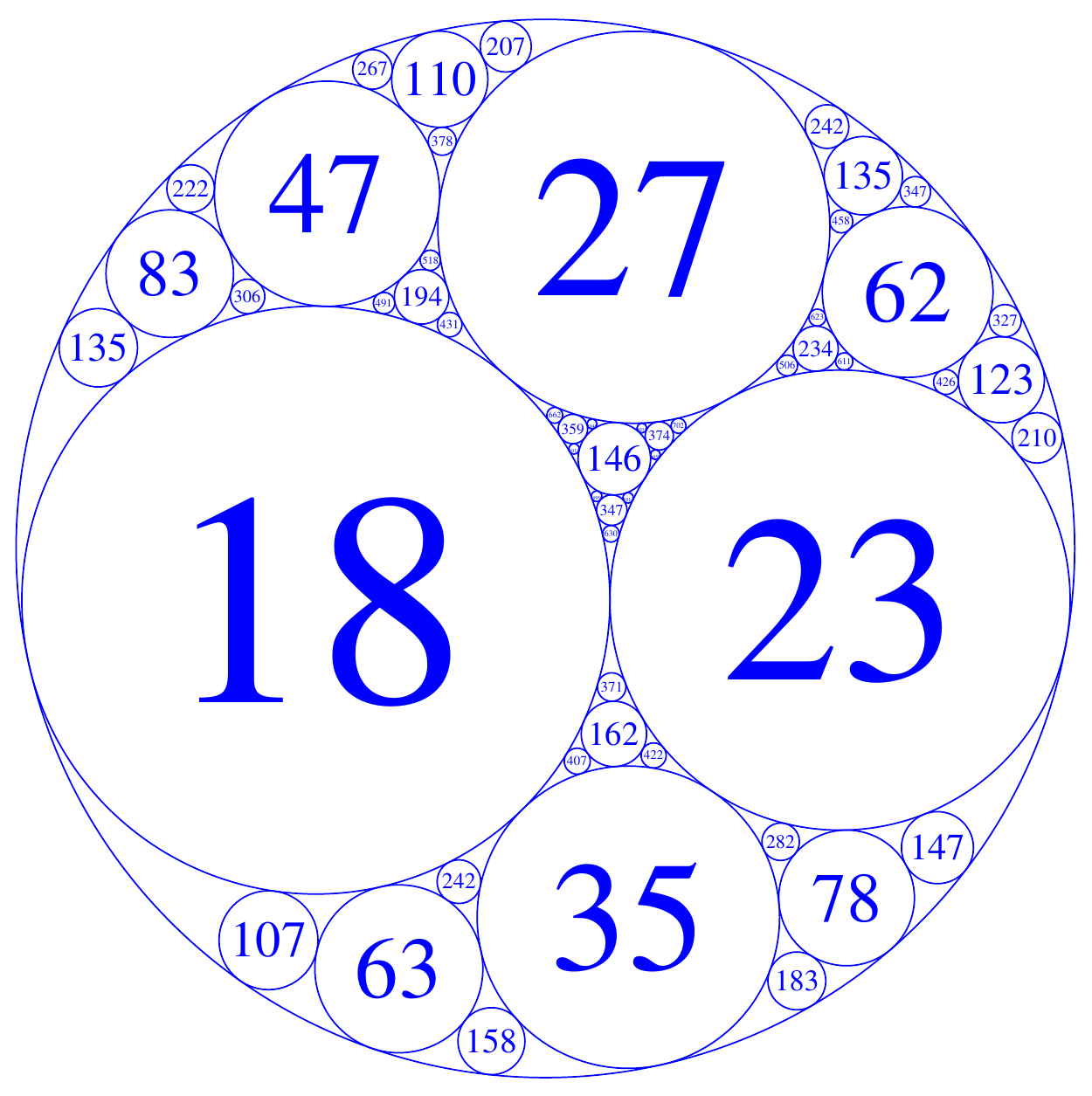}}
  \caption{At each stage, a circle is inscribed in each lune.  The curvature of the outer circle is $-10$.}
  \label{apollonianoperation}
\end{figure}

Define an Apollonian quadruple be a quadruple of nonnegative integers satisfying Descartes's equation.  Then the inscribing operation described above can be expressed as the transformation $(a,\, b,\, c,\, d)\rightarrow(a,\, b,\, c,\, 2(a+b+c)-d)$.  Now, consider the matrices $$
S_1=\begin{pmatrix}-1&2&2&2\\0&1&0&0\\0&0&1&0\\0&0&0&1\\\end{pmatrix},\hspace{1mm}
S_2=\begin{pmatrix}1&0&0&0\\2&-1&2&2\\0&0&1&0\\0&0&0&1\\\end{pmatrix},\hspace{1mm}
S_3=\begin{pmatrix}1&0&0&0\\0&1&0&0\\2&2&-1&2\\0&0&0&1\\\end{pmatrix},\hspace{1mm}
S_4=\begin{pmatrix}1&0&0&0\\0&1&0&0\\0&0&1&0\\2&2&2&-1\\\end{pmatrix}.
$$
It is clear that for an Apollonian quadruple $v=(a,\, b,\, c,\, d)^{T}$, we have $S_{4}v=(a,\, b,\, c,\, 2(a+b+c)-d)^{T}$, and analogous relations hold for $S_1$, $S_2$, $S_3$.  Therefore, $S_1$, $S_2$, $S_3$, $S_4$ correspond to transformations on $a$, $b$, $c$, $d$, respectively.  Define the Apollonian group to be the group with $S_1$, $S_2$, $S_3$, $S_4$ as generators.  Previous papers have proved that the Apollonian group is a Coxeter group.  
   
Now, consider an equilateral triangle $T$ and a point $P$ in its plane.  Let $a$ denote the square of the side length of $T$, and let $b$, $c$, and $d$ denote squares of the distances from $P$ to the vertices of $T$.  With simple trigonometry, we can derive the relation $3(a^{2}+b^{2}+c^{2}+d^{2})=(a+b+c+d)^{2}$, which is remarkably similar to Descartes's equation.  For the relation for the general case of an $n$-dimensional simplex, see \cite{Gregorac}.  

In the Apollonian triangle case, as in the Apollonian circle case, there exist four matrices corresponding to reflection operations (instead of inscribing operations), as illustrated in Figure~\ref{operation}, with which a triangle group can be defined.  Because of the great similarities between this case and the Apollonian case, we are motivated to verify whether many of the properties in the Apollonian case also hold for the triangle case, and where they do not, to determine whether the conclusions can be modified.  We also analyze properties of the triangle group and triangle quadruples that have not been studied in the Apollonian case.

In Section~\ref{trianglegroup}, we define the terms relevant to the triangle group and quadruples.  We also discuss the reflection operation in greater detail.  In Section~\ref{rootquadruples}, we introduce the algorithm of reducing triangle quadruples, which was also extensively analyzed for Apollonian packings.  In Section~\ref{geometricgroups}, we prove that the triangle group is a Coxeter group.  In Section~\ref{properties}, we compute several properties of triangle quadruples, many of which have not been determined for Apollonian quadruples, including the number of triangle quadruples bounded by height and by maximal element, as well as results about the product of elements in triangle quadruples.  In Section~\ref{higherdimensions}, we consider the problem in higher dimensions.  Finally, in Section~\ref{openquestions}, we collect open questions requiring additional research.

\section{Triangle Group}\label{trianglegroup}
\subsection{Basic Definitions}
\begin{definition}
A \emph{triangle quadruple} $t=(a,\, b,\, c,\, d)$ is a quadruple of nonnegative integers satisfying $$3(a^{2}+b^{2}+c^{2}+d^{2})=(a+b+c+d)^{2}.$$
\end{definition}
\begin{definition}
A triangle quadruple $(a,\, b,\, c,\, d)$ is \emph{primitive} if $\gcd(a,\, b,\, c,\, d)=1$.
\end{definition}
\begin{definition}
The \emph{triangle group} $T$ is the subgroup of $GL(4, \mathds{Z})$ generated by
$$
S_1=\begin{pmatrix}-1&1&1&1\\0&1&0&0\\0&0&1&0\\0&0&0&1\\\end{pmatrix},\hspace{1mm}
S_2=\begin{pmatrix}1&0&0&0\\1&-1&1&1\\0&0&1&0\\0&0&0&1\\\end{pmatrix},\hspace{1mm}
S_3=\begin{pmatrix}1&0&0&0\\0&1&0&0\\1&1&-1&1\\0&0&0&1\\\end{pmatrix},\hspace{1mm}
S_4=\begin{pmatrix}1&0&0&0\\0&1&0&0\\0&0&1&0\\1&1&1&-1\\\end{pmatrix}.
$$
\end{definition}
\subsection{General Notes About Operations}
Note that the operation generating the triangle quadruple $t'=(a,\, b,\, c,\, a+b+c-d)$ from the triangle quadruple $t=(a,\, b,\, c,\, d)$ can be characterized by $t'=S_4t$, where $t'$ and $t$ are written as column vectors.  Analogous relations hold for operations on the other elements.  

Geometrically, the operation generating $(7,\, 4,\, 9,\, 1)$ from $(7,\, 4,\, 3,\, 1)$ is illustrated in Figure~\ref{operation}, in the case where 7 corresponds to the square of the side length of the equilateral triangle.  However, if we remove the restrictions on order, then every quadruple actually generates four different configurations depending on the side length of the equilateral triangle.  They can be encapsulated by one diagram, as illustrated in Figure~\ref{encapsulation}.

\begin{figure}
  \centering
  \captionsetup[subfloat]{labelformat=empty, labelsep=none}
  \subfloat[$t=(7,\, 4,\, 3,\, 1)$]{\includegraphics[trim=0cm -5cm 0cm 0cm,clip=true,totalheight=0.3\textwidth]{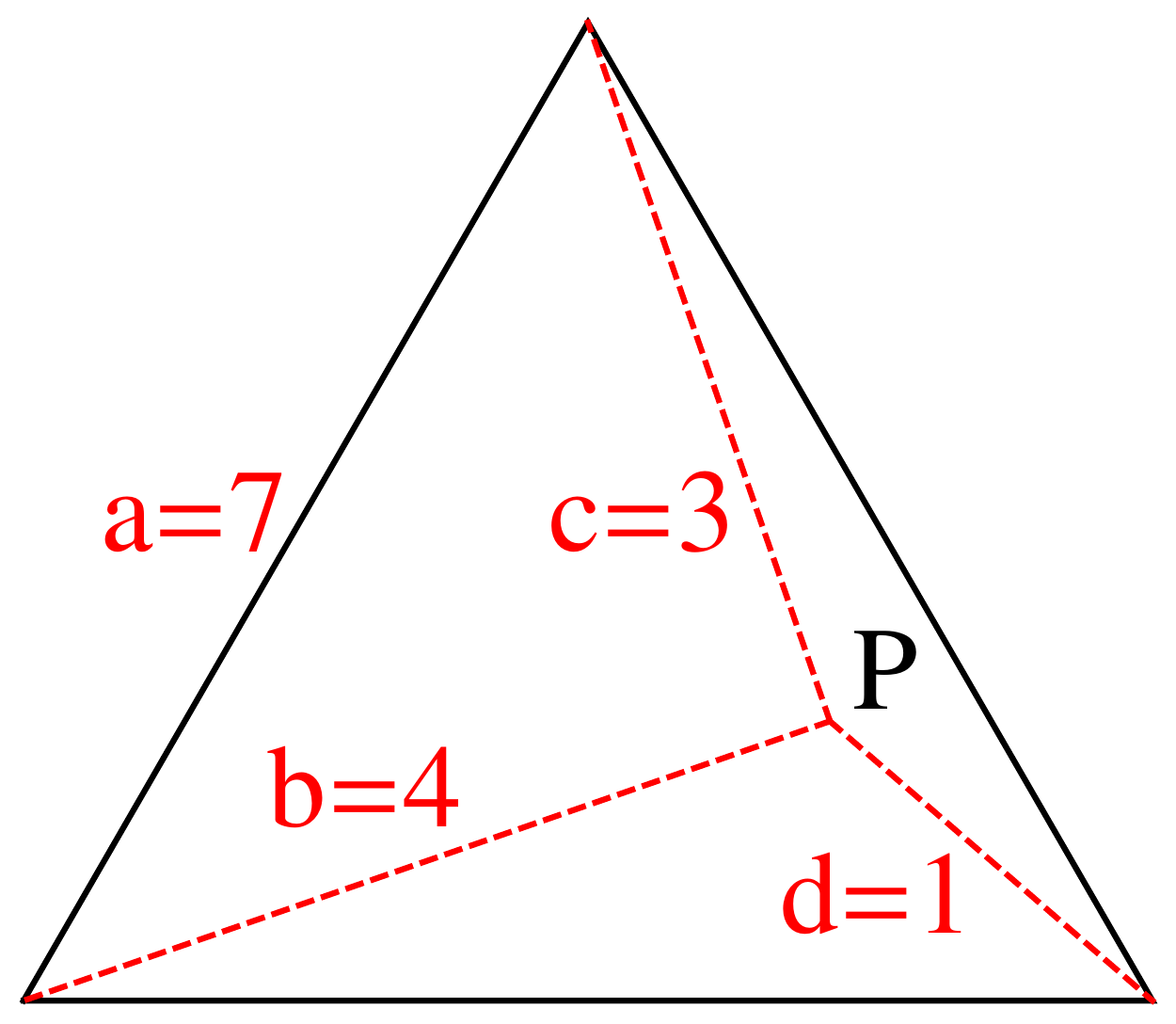}}
  ~\hspace{0.01mm}
  \subfloat{\includegraphics[trim=5cm 2cm 0cm 2cm,clip=true,totalheight=0.36\textwidth]{arrow3.pdf}}
  %~\hspace{0.01mm}
  \subfloat{\includegraphics[trim=0cm -0.95cm 0cm 0cm,clip=true,totalheight=0.3\textwidth]{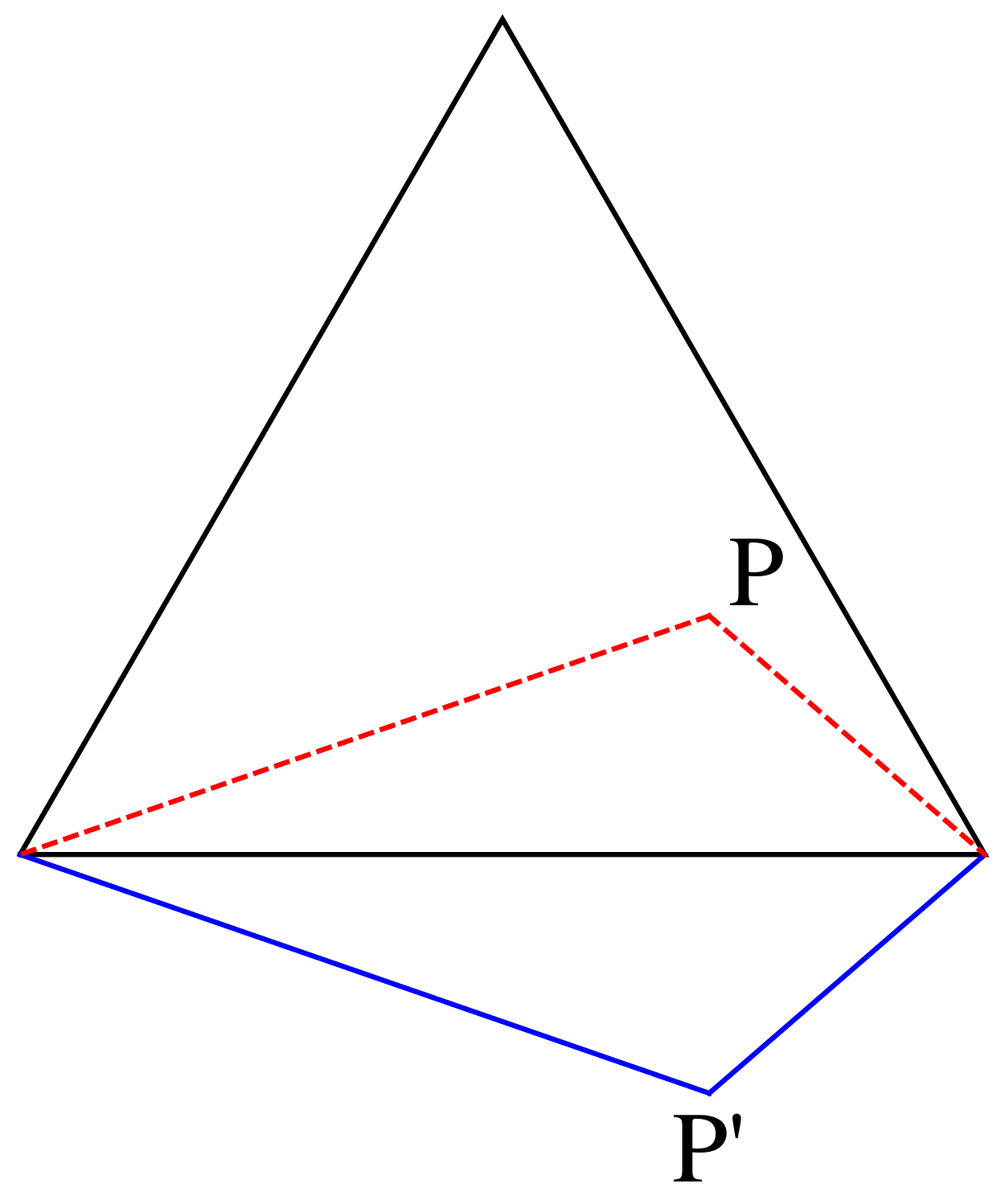}}
  \subfloat{\includegraphics[trim=5cm 2cm 0cm 2cm,clip=true,totalheight=0.36\textwidth]{arrow3.pdf}}
  %~\hspace{0.01mm}
  \subfloat[$t'=(7,\, 4,\, 9,\, 1)$]{\includegraphics[trim=0cm -0.95cm 0cm 0cm,clip=true,totalheight=0.3\textwidth]{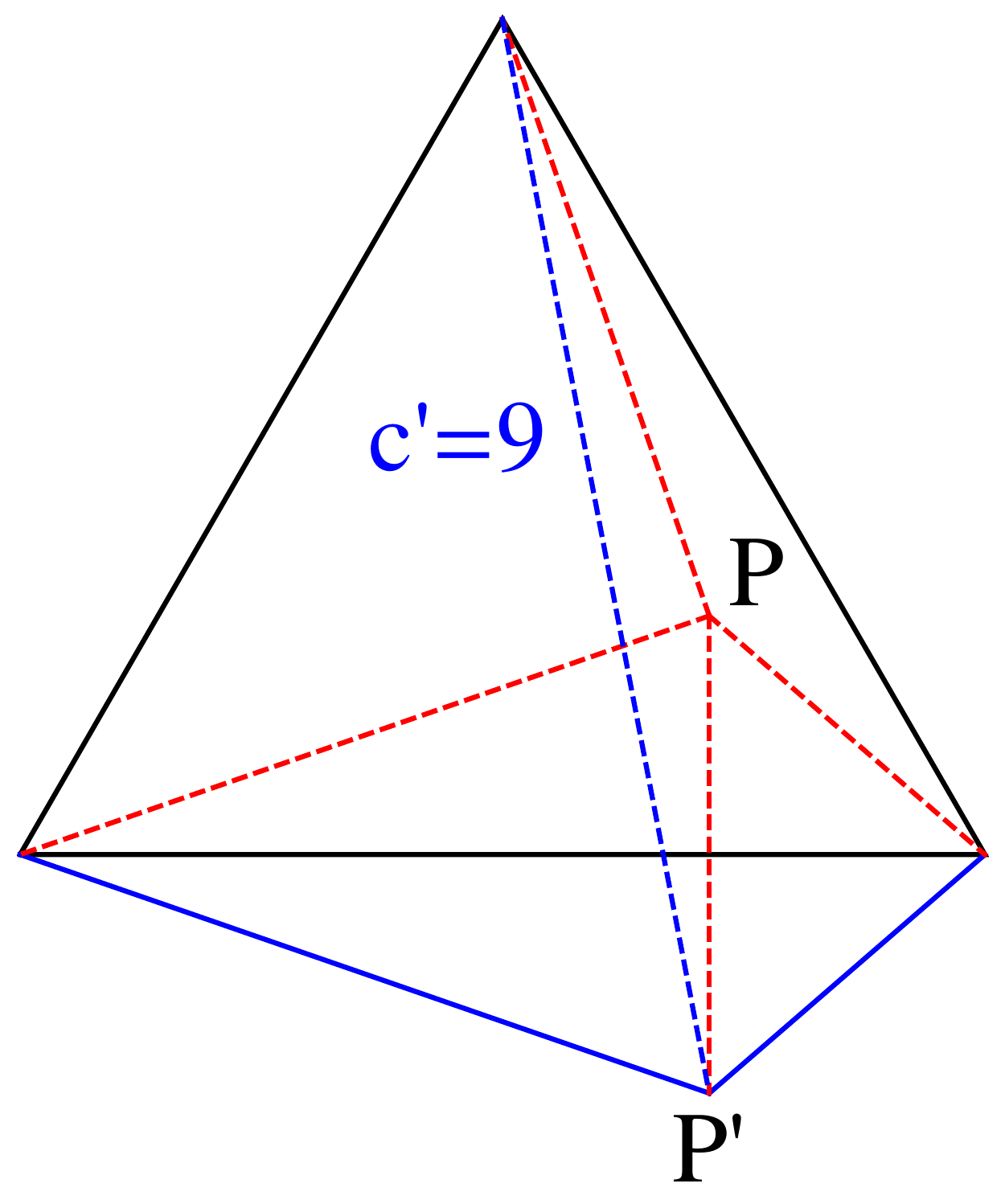}}
  \caption{The new figure is generated by reflecting two of the segments across the side of the equilateral triangle.  It is clear that if the original point is inside the triangle, then the new point must be outside the triangle and vice versa.}
  \label{operation}
\end{figure}

\begin{figure}
\centering
\includegraphics[scale=0.4]{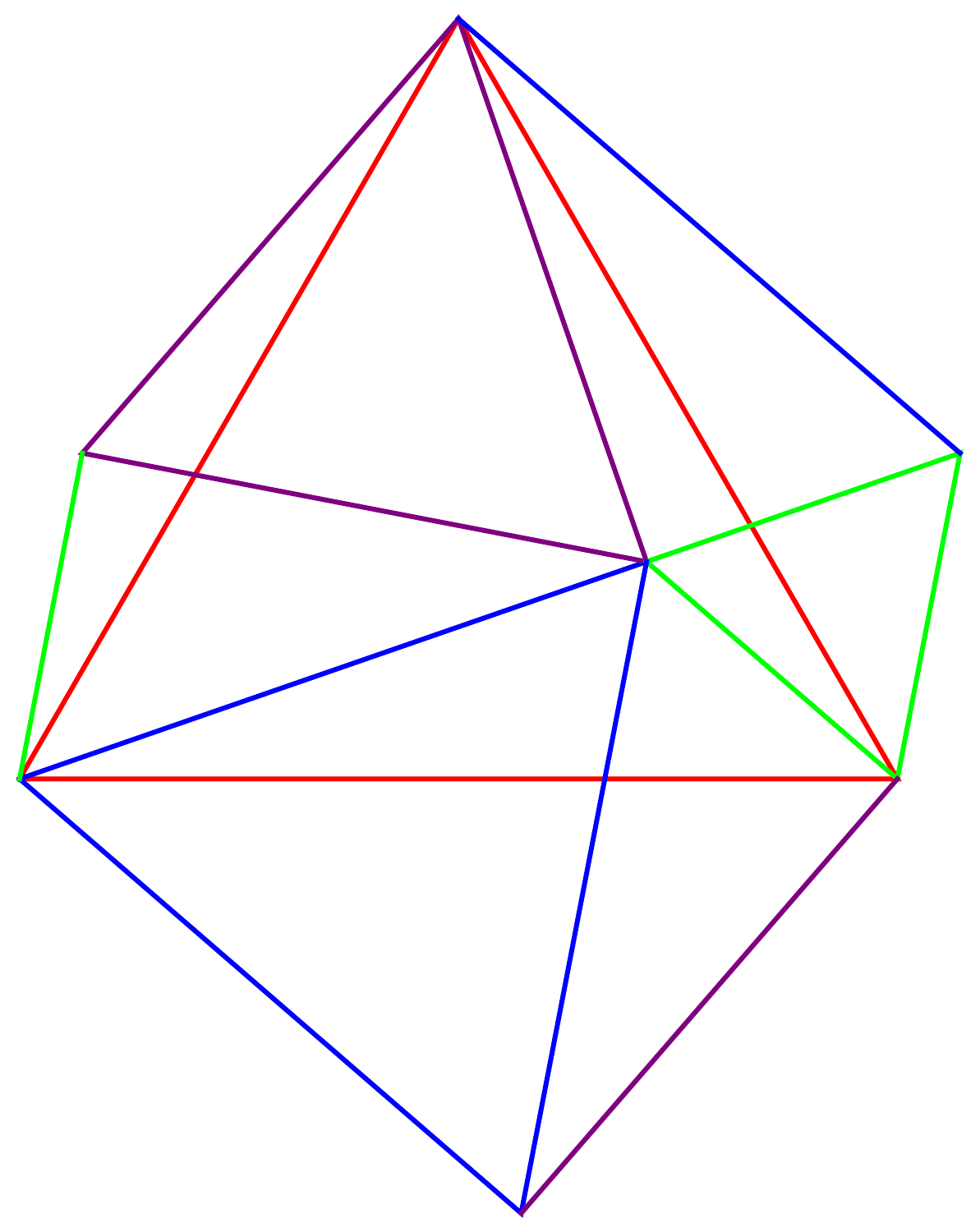}
\caption{Segments of the same length are colored the same color.}
\label{encapsulation}
\end{figure} 

\section{Reduction Theory and Root Quadruples}\label{rootquadruples}
Suppose we begin with the quadruple $(1,\, 1,\, 3,\, 4)$.  Let a reducing action be an operation that decreases the sum of the elements in a triangle quadruple.  We can reduce this to $(1,\, 1,\, 3,\, 1)$ and then to $(1,\, 1,\, 0,\, 1)$.  At this point, no further operation can reduce the sum.

\begin{definition}
A triangle quadruple $t=(a,\, b,\, c,\, d)$ is a \emph{root quadruple} if it is not possible to apply a generator that reduces the sum $a+b+c+d$.
\end{definition}  
\begin{lemma}\label{reductionalgorithm}
For any triangle quadruple $t=(a, b, c, d)$, applying the generator corresponding to the largest element does not increase the sum $a+b+c+d$.  In addition, $t$ can be reduced to a root quadruple $t'=(0,\, x,\, x,\, x)$ (or permutations) in a finite number of operations, where $x=\gcd(a,\, b,\, c,\, d)$. 
\end{lemma}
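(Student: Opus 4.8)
The plan is to treat the two assertions in turn, with the algebraic heart being the inequality $2\cdot(\max)\ge(\text{sum of the other three})$ for a triangle quadruple. Throughout, when $d$ denotes a coordinate I write $p=a+b+c$ for the sum of the other three and use that the defining equation rearranges, viewing $d$ as the fourth coordinate, to $2d^{2}-2pd+\bigl(3(a^{2}+b^{2}+c^{2})-p^{2}\bigr)=0$, i.e.\ $3(a^{2}+b^{2}+c^{2})=p^{2}+2pd-2d^{2}$, and that $3(a^{2}+b^{2}+c^{2})-p^{2}=(a-b)^{2}+(b-c)^{2}+(c-a)^{2}\ge 0$.

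First I would prove the ``no increase'' statement, in the sharpened form that the sum strictly decreases unless $t$ is already a permutation of $(0,x,x,x)$. Up to relabeling let $d=\max(a,b,c,d)$, so $0\le a,b,c\le d$ and hence $a^{2}\le ad$, $b^{2}\le bd$, $c^{2}\le cd$; summing and substituting into $3(a^{2}+b^{2}+c^{2})=p^{2}+2pd-2d^{2}$ gives $3pd\ge p^{2}+2pd-2d^{2}$, that is $2d^{2}+pd-p^{2}\ge 0$, that is $(2d-p)(d+p)\ge 0$, which (since $d+p=a+b+c+d\ge 0$) forces $2d\ge p$. Applying the generator on the maximal coordinate replaces $(a,b,c,d)$ by $(a,b,c,p-d)$, which is again a triangle quadruple: $p-d$ is the other root of the quadratic above, and the two roots have nonnegative sum $p$ and nonnegative product $\tfrac12\bigl((a-b)^{2}+(b-c)^{2}+(c-a)^{2}\bigr)$, hence both are $\ge 0$. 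The sum then changes by $\bigl(p+(p-d)\bigr)-\bigl(p+d\bigr)=-(2d-p)\le 0$, with equality only when $2d=p$.

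Next I would classify the root quadruples. If $t$ is a root quadruple, choose a maximal coordinate $d$; since no generator reduces the sum, the previous paragraph gives $2d=p$. Substituting $a+b+c=2d$ into the defining equation yields $a^{2}+b^{2}+c^{2}=2d^{2}$, whence $ab+bc+ca=\tfrac12\bigl(p^{2}-(a^{2}+b^{2}+c^{2})\bigr)=d^{2}$, and therefore $(d-a)(d-b)+(d-b)(d-c)+(d-c)(d-a)=3d^{2}-2dp+(ab+bc+ca)=3d^{2}-4d^{2}+d^{2}=0$. As $0\le a,b,c\le d$, every summand here is nonnegative, so each vanishes; hence at most one of $a,b,c$ is strictly less than $d$, and together with $a+b+c=2d$ this forces $\{a,b,c\}=\{0,d,d\}$ as a multiset. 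Thus the root quadruples are precisely the permutations of $(0,x,x,x)$, and conversely one checks directly that these are root quadruples: $S_{1}$ sends $(0,x,x,x)$ to $(3x,x,x,x)$, raising the sum, while $S_{2},S_{3},S_{4}$ leave the sum unchanged.

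Finally I would use the invariance of the gcd of the coordinates---each $S_{i}$ is an integer matrix with $S_{i}^{2}=I$, so it preserves this gcd, and hence so does every reduction---and run the reduction itself: starting from $t$, repeatedly apply the generator at a maximal coordinate. By the sharpened first step, each move either strictly lowers the (nonnegative integer) sum or else $t$ is already a permutation of $(0,x,x,x)$; hence after finitely many moves we reach such a quadruple, and $x=\gcd(a,b,c,d)$ by invariance. The one genuinely non-formal step is the passage from $2d=p$ to $\{a,b,c\}=\{0,d,d\}$, i.e.\ the vanishing-sum-of-nonnegative-products argument above; everything else is bookkeeping, the only care needed being to confirm---as in the second paragraph---that every intermediate quadruple is again a quadruple of nonnegative integers.
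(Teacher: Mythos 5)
Your proof is correct, and while it follows the same overall strategy as the paper --- establish $2\cdot\max \ge (\text{sum of the other three})$, characterize the equality case as the permutations of $(0,x,x,x)$, then descend on the nonnegative integer sum --- the execution of the key inequality is genuinely different and cleaner. The paper solves the defining quadratic for the maximal element via the quadratic formula and argues by contradiction (with a case split on the sign of $-b+c+d$) that the maximum must be the \emph{larger} root, from which $2a\ge b+c+d$ falls out; the equality case is then analyzed through $d=(\sqrt{b}\pm\sqrt{c})^2$. You instead get $2d\ge p$ directly from $a^2+b^2+c^2\le(a+b+c)d$ and the factorization $(2d-p)(d+p)\ge0$, avoiding square roots entirely, and you settle the equality case with the identity $(d-a)(d-b)+(d-b)(d-c)+(d-c)(d-a)=0$ forcing each nonnegative summand to vanish --- an argument that stays within the integers and is easier to check. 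Two further points in your favor: you explicitly verify the converse (that permutations of $(0,x,x,x)$ really are root quadruples) and you spell out the termination and gcd-invariance bookkeeping, both of which the paper leaves implicit or defers to the surrounding text; you also get the product of the two roots right as $\tfrac12\bigl((a-b)^2+(b-c)^2+(c-a)^2\bigr)$, where the paper drops the factor $\tfrac12$ (harmlessly, since only the sign matters). No gaps.
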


\begin{proof}
Assume that $a\geq b\geq c\geq d$ without loss of generality.  From the condition, we have $$a=\frac{1}{2}(b+c+d)\pm\sqrt{\frac{3}{2}(bc+cd+db)-\frac{3}{4}(b^2+c^2+d^2)}.$$  We first show that  $$a=\frac{1}{2}(b+c+d)+\sqrt{\frac{3}{2}(bc+cd+db)-\frac{3}{4}(b^2+c^2+d^2)}.$$  Suppose for the sake of contradiction this is not so.  If $-b+c+d<0$, then note that
\begin{align*}
&\frac{1}{2}(-b+c+d)<\sqrt{\frac{3}{2}(bc+cd+db)-\frac{3}{4}(b^2+c^2+d^2)}\\
\Rightarrow\hspace{1mm}& \frac{1}{2}(b+c+d)-\sqrt{\frac{3}{2}(bc+cd+db)-\frac{3}{4}(b^2+c^2+d^2)}<b\\
\Rightarrow\hspace{1mm}& a<b,
\end{align*} 
a contradiction.  Therefore, $-b+c+d\geq0$.  Then we have
\begin{align*}
&b^2+c^2+d^2-2bc-cd-2db=b(b-c-d)+c(c-b)+d(d-b)-cd<0\\
\Rightarrow\hspace{1mm}&b^2+c^2+d^2-2bc+2cd-2db<6(bc+cd+db)-3(b^2+c^2+d^2)\\
\Rightarrow\hspace{1mm}&\frac{1}{2}(-b+c+d)<\sqrt{\frac{3}{2}(bc+cd+db)-\frac{3}{4}(b^2+c^2+d^2)}\\
\Rightarrow\hspace{1mm}&a<b,
\end{align*}
also a contradiction.  It follows that we must have, $$a=\frac{1}{2}(b+c+d)+\sqrt{\frac{3}{2}(bc+cd+db)-\frac{3}{4}(b^2+c^2+d^2)},$$ as claimed.  Now, we will show that $2a\geq b+c+d$, with equality when $d=0$.  Note that $$2a=b+c+d+\sqrt{6(bc+cd+db)-3(b^2+c^2+d^2)}\geq b+c+d,$$ with equality when 
\begin{align*}
&(b^2+c^2+d^2)-2(bc+cd+db)=0\\
\Rightarrow \hspace{1mm}&d^2-2(b+c)d+(b^2-2bc+c^2)=0\\
\Rightarrow \hspace{1mm}&d=\frac{2(b+c)\pm\sqrt{4(b+c)^2-4(b^2-2bc+c^2)}}{2}=b+c\pm2\sqrt{bc}=(\sqrt{b}\pm\sqrt{c})^2.
\end{align*}
Assume $c\neq 0$.  If $d=(\sqrt{b}+\sqrt{c})^2$, then clearly $d>c$, contradicting the minimality of $d$.  Therefore, we must have $d=(\sqrt{b}-\sqrt{c})^2$. However, then $$a=\frac{1}{2}(b+c+d)=\frac{1}{2}(b+c+b-2\sqrt{bc}+c)=b+c-\sqrt{bc}=b-\sqrt{c}(\sqrt{b}-\sqrt{c})\leq b,$$ contradicting the maximality of $a$ unless $b=c$, in which case the triangle quadruple $(a,\, b,\, c,\, d)=(a,\, a,\, a,\, 0)$.  The case where $c=0$ generates the same solution.
\end{proof}

In addition, it is clear from this that the nonnegativity of quadruples is preserved under this operation.  Indeed, the product of $d$ and $a+b+c-d$ is $$3(a^2+b^2+c^2)-(a+b+c)^2=
 (a-b)^2+(b-c)^2+(c-a)^2,$$ the constant term of the equation $3(a^2+b^2+c^2+d^2)=(a+b+c+d)^2$.  Note that this is nonnegative, so $d(a+b+c-d)\geq 0$, implying that if $d>0$, then $a+b+c-d\geq 0$ as well.  On the other hand, if $d=0$, then $a=b=c$, as proved in Lemma~\ref{reductionalgorithm}.
  
\begin{lemma}\label{invariant}
A triangle quadruple $(a,\, b,\, c,\, d)$ can be reduced to the unique quadruple $(0,\, x,\, x,\, x)$ (or permutations), where $x=\gcd(a,\, b,\, c,\, d)$. 
\end{lemma}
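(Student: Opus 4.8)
The plan is to build directly on Lemma~\ref{reductionalgorithm}: its proof already shows that every root quadruple is, up to permutation of its entries, of the form $(0,\,x,\,x,\,x)$, and that any triangle quadruple reaches such a root quadruple after finitely many reducing operations. What remains is to pin down the value $x$ and to see that the endpoint does not depend on the choices made during reduction. Both points will follow from one extra observation: $\gcd(a,\,b,\,c,\,d)$ is invariant under the triangle group action.

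First I would record the gcd invariance. Each generator $S_i$ lies in $GL(4,\mathds{Z})$ by definition of $T$; in fact a one-line computation gives $S_i^{2}=I$, so $S_i^{-1}=S_i$ is again an integer matrix. Hence, for $v\in\mathds{Z}^{4}$ and $v'=S_i v$, every entry of $v'$ is an integer combination of the entries of $v$, and conversely every entry of $v$ is an integer combination of the entries of $v'$, so $\gcd(v)=\gcd(v')$. Iterating over a word in the generators, $\gcd$ is constant along the whole $T$-orbit of $v$. In particular, since $\gcd(0,\,x,\,x,\,x)=x$, any root quadruple lying in the orbit of $(a,\,b,\,c,\,d)$ must be a permutation of $(0,\,x,\,x,\,x)$ with $x=\gcd(a,\,b,\,c,\,d)$.

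Next I would assemble the statement. Starting from a triangle quadruple $t$, Lemma~\ref{reductionalgorithm} provides a reducing operation whenever $t$ is not already a root quadruple, and along any chain of reducing operations the entry sum strictly decreases; since these sums are nonnegative integers, every maximal such chain terminates, necessarily at a root quadruple, which—being obtained from $t$ by a product of generators—lies in the orbit of $t$. By the previous paragraph this root quadruple is a permutation of $(0,\,x,\,x,\,x)$ with $x=\gcd(a,\,b,\,c,\,d)$, and this holds no matter which reducing steps were performed. This yields both the claimed normal form and the asserted uniqueness.

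I do not expect a genuine obstacle: once Lemma~\ref{reductionalgorithm} is in hand, the argument is elementary, resting only on the fact that the generators act by integral, integrally invertible matrices. The one point requiring a little care is the interpretation of the word ``unique,'' which must be read as ``independent of the chosen sequence of reducing operations''; it is precisely the gcd invariance of the first step that makes the reduction confluent, so that the endpoint is a canonical invariant of the orbit rather than an artifact of the reduction path.
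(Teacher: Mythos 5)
Your proposal is correct and follows essentially the same route as the paper: combine the reduction of Lemma~\ref{reductionalgorithm} with the observation that $\gcd$ is invariant under the generators to identify the endpoint as $(0,\,x,\,x,\,x)$ with $x=\gcd(a,\,b,\,c,\,d)$. You are somewhat more explicit than the paper about why the $\gcd$ is preserved (integrality of $S_i=S_i^{-1}$) and about termination and path-independence, but the substance is the same.
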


\begin{proof}
Suppose we begin with the triangle quadruple $(a,\, b,\, c,\, d)$.  Let $x=\gcd(a,\, b,\, c,\, d)$.  Then it is clear that $x=\gcd(a,\, b,\, c,\, a+b+c-d)$.  Therefore, the greatest common divisor of the elements is an invariant, so by Lemma~\ref{reductionalgorithm}, by applying the generator corresponding the largest element at every step, it can be reduced to the unique quadruple $(0,\, x,\, x,\, x)$ (or permutations).  
\end{proof}

\begin{corollary}
It easily follows from Lemma~\ref{invariant} that for any two triangle quadruples $t=(a,\, b,\, c,\, d)$ and $t'=(a',\, b',\, c',\, d')$, if $\gcd(a,\, b,\, c,\, d)=\gcd(a',\, b',\, c',\, d')$, then it is possible to obtain $t'$ from $t$ by applying a finite number of generators and vice versa.  Therefore, all primitive triangle quadruples are contained in one orbit of $(0,\, 1,\, 1,\, 1)$ in $T$.  
\end{corollary}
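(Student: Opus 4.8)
The plan is to leverage two facts: that each generator $S_i$ is an involution, and that Lemma~\ref{invariant} reduces every triangle quadruple to a root determined solely by its gcd. First I would record that $S_i^2 = I$ for each $i$, since applying $S_i$ twice returns coordinate $i$ to its original value, $(a_j + a_k + a_\ell) - ((a_j+a_k+a_\ell) - a_i) = a_i$; hence $S_i^{-1} = S_i \in T$. Consequently any element of $T$ written as a finite word $S_{i_1} \cdots S_{i_m}$ has inverse $S_{i_m} \cdots S_{i_1}$, again a finite product of generators. Thus ``obtainable from $t$ by finitely many generators'' is a symmetric relation, and to connect two quadruples it suffices to route both through a common intermediate.

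Next, given $t = (a,b,c,d)$ and $t' = (a',b',c',d')$ with $\gcd(a,b,c,d) = \gcd(a',b',c',d') = x$, I would invoke Lemma~\ref{invariant} to produce words $w, w' \in T$ with $wt = r$ and $w't' = r'$, where $r$ and $r'$ are each the root $(0,x,x,x)$ up to a permutation of coordinates. Using invertibility, $t = w^{-1}r$ and $t' = w'^{-1}r'$, so it remains only to exhibit $u \in T$ with $ur = r'$; then $t' = w'^{-1}u\,w\,t$ expresses $t'$ as a finite product of generators applied to $t$. Specializing to $x = 1$ gives the root $(0,1,1,1)$ (or a permutation) and places every primitive quadruple in the orbit of $(0,1,1,1)$, which is the asserted conclusion.

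The entire difficulty therefore concentrates in the single step of connecting the two roots $r$ and $r'$, both permutations of $(0,x,x,x)$. I would attempt this by constructing explicit generator-words that act on $(0,x,x,x)$ as coordinate transpositions, searching among short products such as $S_1 S_2 S_1$ and its relatives for one whose effect on the root is to relocate the distinguished zero entry from one coordinate to another. This is the step I expect to be the main obstacle: the $S_i$ are reflections rather than permutation matrices, so it is not automatic that their products move the zero, and the reductions of Lemma~\ref{reductionalgorithm} never decrease a root's sum and hence never migrate its zero on their own. Establishing the existence of such words, equivalently deciding whether $T$ acts transitively on the permutations of $(0,x,x,x)$, is the crux on which the stated single-orbit conclusion rests, and I would treat a congruence analysis of the entries of $(0,x,x,x)$ under the generators as the natural diagnostic for whether the required connecting words can exist.
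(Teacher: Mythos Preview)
Your framework is exactly right, and in fact you have done more than the paper does: the paper offers no argument beyond the phrase ``easily follows from Lemma~\ref{invariant}.'' You have also put your finger on a genuine subtlety that the paper glosses over.

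Your proposed search for generator-words carrying $(0,x,x,x)$ to $(x,0,x,x)$ will fail, and your suggested diagnostic---a congruence analysis---detects this immediately. Since $3\mid(a+b+c+d)$ for every triangle quadruple, the operation $d\mapsto a+b+c-d=(a+b+c+d)-2d\equiv d\pmod 3$ fixes each coordinate modulo~$3$, and likewise for $S_1,S_2,S_3$. Hence the residue vector $(a,b,c,d)\bmod 3$ is a $T$-invariant, and $(0,1,1,1)$ and $(1,0,1,1)$ lie in \emph{distinct} $T$-orbits as ordered quadruples. So $T$ does not act transitively on the permutations of the root.

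The resolution is that the paper is tacitly working with quadruples up to permutation: Lemma~\ref{reductionalgorithm} begins ``Assume that $a\geq b\geq c\geq d$ without loss of generality,'' Lemma~\ref{invariant} speaks of ``the unique quadruple $(0,x,x,x)$ (or permutations),'' and the worked example reduces $(1,1,3,4)$ to $(1,1,0,1)$ rather than to $(0,1,1,1)$. Read this way, your step ``find $u\in T$ with $ur=r'$'' becomes vacuous---both $r$ and $r'$ are the \emph{same} unordered root $\{0,x,x,x\}$---and your argument is then complete and matches the paper's intended one-line deduction. If one insists on ordered quadruples and the group $T$ as literally defined, the corollary as stated is false; the honest fix is either to pass to unordered quadruples or to enlarge $T$ by the coordinate permutations.
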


%\subsection{Graphs and Trees}
%Applying the ideas of operations, we can construct a graph of all triangle quadruples with the quadruples as vertices and edges connecting quadruples generated from each other by one operation.  The graph has an infinite number of root quadruples, which are denoted as the seeds, and each quadruple (or a vertex) has degree ranging from 1 to 4.  However, it is not immediately clear whether there exists a good method for determining the degree of a quadruple other than direct computations.  

\section{Geometric Groups}\label{geometricgroups}
%\subsection{Coxeter Group}
\begin{theorem}
The triangle group with generators $S_1$, $S_2$, $S_3$, $S_4$ is a hyperbolic Coxeter group.
\end{theorem}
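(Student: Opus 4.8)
The plan is to realize the triangle group as a reflection group of a real quadratic space of signature $(3,1)$ and then quote the standard faithfulness theorem for the geometric (Tits) representation of a Coxeter group. Rewrite the defining equation $3(a^{2}+b^{2}+c^{2}+d^{2})=(a+b+c+d)^{2}$ as $v^{T}(3I-J)v=0$, where $v=(a,b,c,d)^{T}$ and $J$ is the all-ones $4\times4$ matrix, and let $\langle\,\cdot\,,\,\cdot\,\rangle$ be the symmetric bilinear form with Gram matrix $G=\tfrac12(3I-J)$, so that $G$ has diagonal entries $1$ and off-diagonal entries $-\tfrac12$ and $v^{T}Gv=0$ cuts out the same cone. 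I will check that each generator $S_{i}$ is exactly the $G$-orthogonal reflection in the standard basis vector $e_{i}$; once this is done, the triangle group is a group generated by four reflections of $(\mathbb{R}^{4},G)$, and its abstract structure is forced by the Gram matrix of those four roots.

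The verification has three routine steps. First, a direct computation gives $S_{i}^{2}=I$, so each generator is an involution; e.g. the first coordinate of $S_{1}v$ is $-a+b+c+d$, and applying $S_{1}$ again returns $a$. Second, $S_{i}$ fixes the hyperplane $e_{i}^{\perp}=\{v:\langle v,e_{i}\rangle=0\}$ pointwise and sends $e_{i}\mapsto-e_{i}$: indeed $\langle v,e_{1}\rangle=(Gv)_{1}=a-\tfrac12(b+c+d)$, so $v-2\langle v,e_{1}\rangle e_{1}$ has first coordinate $a-(2a-b-c-d)=-a+b+c+d$ and all other coordinates unchanged, which is precisely $S_{1}v$; the cases $i=2,3,4$ are identical by symmetry. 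Hence each $S_{i}$ preserves $G$ and is the $G$-reflection $s_{e_{i}}$. Third, the Gram matrix of the four roots $e_{1},\dots,e_{4}$ has $\langle e_{i},e_{i}\rangle=1$ and $\langle e_{i},e_{j}\rangle=-\tfrac12=-\cos(\pi/3)$ for $i\neq j$; this is exactly the Gram matrix of the Coxeter system whose Coxeter matrix satisfies $m_{ij}=3$ for all $i\neq j$, i.e. the diagram is the complete graph $K_{4}$ with every edge labelled $3$.

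Now I invoke Tits' theorem: for the Coxeter group $W$ with Coxeter matrix $(m_{ij})$, the geometric representation on the root space is faithful, and its image is precisely the subgroup of $GL(V)$ generated by the reflections in the roots $e_{i}$ (see Bourbaki, \emph{Groupes et alg\`ebres de Lie}, Ch.~V, §4, or Humphreys, \emph{Reflection Groups and Coxeter Groups}, §5.3--5.4). Since the $S_{i}$ are exactly those reflections, the triangle group is isomorphic to $W$ and is therefore a Coxeter group. Finally, $G=\tfrac12(3I-J)$ has eigenvalue $\tfrac12(3-4)=-\tfrac12$ on the line $\mathbb{R}\mathbf{1}$ and eigenvalue $\tfrac32$ on its three-dimensional complement, so $G$ is nondegenerate of Lorentzian signature $(3,1)$; consequently $W$ acts as a discrete reflection group on $\mathbb{H}^{3}$ and is a hyperbolic Coxeter group. (As a bonus, each rank-$3$ sub-diagram is a triangle with all labels $3$, i.e. affine $\tilde A_{2}$, and the rank-$\le 2$ ones are spherical, so the diagram is a quasi-Lann\'er diagram of finite covolume.) The only genuinely non-elementary ingredient is the appeal to faithfulness of the geometric representation, which is what rules out any relations among the $S_{i}$ beyond the Coxeter relations; the rest is the bookkeeping above, and it directly mirrors the treatment of the Apollonian group.
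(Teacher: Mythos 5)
Your proposal is correct and follows essentially the same route as the paper: identify each $S_i$ as a reflection preserving the quadratic form $3\sum a_i^2-(\sum a_i)^2$, recognize the resulting representation as the geometric (Tits) representation of the Coxeter group with all $m_{ij}=3$, invoke faithfulness (Humphreys \S 5.4), and compute that the form has signature $(3,1)$ to conclude hyperbolicity. Your explicit verification that $S_i v=v-2\langle v,e_i\rangle e_i$ with respect to the normalized Gram matrix $\tfrac12(3I-J)$ is a slightly cleaner way of pinning down the identification with the Tits representation than the paper's eigenvalue argument, but the substance is the same.
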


\begin{proof}
It is easy to verify that $S_1$, $S_2$, $S_3$, $S_4$ satisfy \begin{equation}\label{coxeter1}S_i^2=I\text{ for }i\in\{1,\, 2,\, 3,\, 4\}\end{equation} and \begin{equation}\label{coxeter2}(S_iS_j)^3=I\text{ for $i\neq j$,\text{ and }$i,\, j\in\{1,\, 2,\, 3,\, 4\}$.}\end{equation}  

\begin{lemma}\label{coxetergroup1}
The generators $S_i$ are reflections.  
\end{lemma}

\begin{proof}
The eigenvalues of $S_i$ are 1, 1, 1, $-1$.  It follows that the operation corresponding to $S_i$ is the reflection with respect to the hyperplane spanning the eigenvectors $v_{i_1}$, $v_{i_2}$, $v_{i_3}$ of $S_i$.
\end{proof}
\begin{lemma}\label{coxetergroup2}
For $x=(a,\, b,\, c,\, d)$, each $S_i$ preserves the quadratic form $$Q(x)=3(a^2+b^2+c^2+d^2)-(a+b+c+d)^2=xAx^T,$$ where $$A=\begin{pmatrix}2&-1&-1&-1\\-1&2&-1&-1\\-1&-1&2&-1\\-1&-1&-1&2\\\end{pmatrix}.$$  That is, $Q(x)=Q(S_ix)$. 
\end{lemma}
\begin{proof}
This is easily verified by a straightforward calculation.
\end{proof}
Construct an abstract Coxeter group with $A$ as its Cartan matrix. By Lemma~\ref{coxetergroup1} and Lemma~\ref{coxetergroup2} and application of basic properties of Coxeter groups, we conclude that the triangle group is the constructed Coxeter group. Section 5.4 in [7] proves that all relations between the matrices $S_i$ formally follow from \eqref{coxeter1} and \eqref{coxeter2}.  In other words, the geometric representation is faithful.  In particular, since the Cartan matrix $A$ of the triangle group has a $2\times 2$ positive definite submatrix and negative determinant, its signature must be $(3,\,1)$, so the triangle group is indeed hyperbolic.  
\end{proof}
Note that the cubic relations for $S_i$ are unique to the triangle group case, as they do not hold in the Apollonian circle case.  The corresponding Coxeter diagram that encodes the Coxeter matrix is a connected graph, by~\eqref{coxeter2}.
%\subsection{Cayley Graph}
%The Cayley graph that encodes the triangle group has as its vertices the reduced words generating the reduced words in $T$ and the edges according to the generators.  See figure.  Let $H$ be the subset of triangle quadruples of $\mathds{R}^4$.  This can be encoded by a graph with its vertices the triangle quadruples derived from a root quadruple $\mathbf{v}$.  Then we can define a mapping $f$ from $T$ to $H$ $$M\mapsto M\mathbf{v}\hspace{4mm}\text{ for $M\in T$}.$$  That is, for two vertices $M$ and $N$ in the first graph, if $M\mathbf{v}=N\mathbf{v}$, then these two vertices are consolidated in the second graph. Therefore, the second graph is the first graph quotiented by $H$.
 
\section{Properties of Triangle Quadruples}\label{properties}
\subsection{The Number of Triangle Quadruples Bounded by Height}
For a triangle quadruple $Q=(a,\, b,\, c,\, d)$, define $H(Q)$ to be the height $\sqrt{a^2+b^2+c^2+d^2}$ of $Q$.

\begin{theorem}\label{height}
For an integer $n$, define $F(n)$ to be the number of triangle quadruples $Q$ with height $H(Q)\leq n$.  Then $F(n)=O(n^2\log^3(n))$. 
\end{theorem}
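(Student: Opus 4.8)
The plan is to count triangle quadruples by first reducing each quadruple to its root quadruple $(0,x,x,x)$ via Lemma~\ref{reductionalgorithm}, then bounding, for each value of $x$, the number of quadruples of bounded height that reduce to that root. The key structural fact is that the reduction algorithm applies the generator corresponding to the largest element, and by Lemma~\ref{reductionalgorithm} this strictly decreases the sum (hence the height, up to constants) until the root is reached. So I would parametrize a triangle quadruple $Q$ of height $\le n$ by (i) its root parameter $x = \gcd(a,b,c,d)$, which satisfies $x \le n$ since the root has height $x\sqrt{3}\le H(Q)\cdot(\text{const})$ — actually more carefully, $x \le H(Q) \le n$ because each entry of the root is at most the height — and (ii) the sequence of generators $S_{i_1}, S_{i_2}, \dots, S_{i_k}$ applied to go from the root up to $Q$. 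Since consecutive generators in a reduced word must differ (as $S_i^2 = I$), the number of words of length $k$ is at most $4\cdot 3^{k-1}$.

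The next step is to bound the word length $k$ in terms of $n$. Here I would use that applying a generator to $(a,b,c,d)$ with, say, $d$ the entry being changed and $a$ now the largest, produces a new largest entry roughly of size comparable to the sum; more precisely, one should show each "growing" step at least, say, increases the height by a bounded multiplicative factor bounded below by some $\lambda > 1$ — or, if that is too strong, that after a bounded number of steps the height at least doubles. If the height multiplies by a factor $\ge \lambda>1$ every bounded number of steps, then reaching height $n$ from a root of height $\ge \sqrt 3$ requires $k = O(\log n)$ steps. Combined with the previous paragraph, for each fixed $x$ the number of quadruples of height $\le n$ reducing to $(0,x,x,x)$ is at most $\sum_{k \le C\log n} 4\cdot 3^{k-1} = O(3^{C\log n}) = O(n^{C\log 3})$, which is polynomial but with the wrong exponent — so this crude bound is not enough and must be refined.

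To get the exponent down to $2$ with only logarithmic factors, I would not count all words but observe that many words give quadruples exceeding height $n$, and that the relevant count is really the number of \emph{distinct} quadruples. The refined approach: think of the orbit as living on the "Cayley graph / tree-like" structure of the Coxeter group acting on the cone $Q(x)=$ const, and note that the height roughly tracks the word length exponentially, so the number of group elements $w$ with $\|wv_0\| \le n$ is polynomial in $n$ of some fixed degree independent of the exponential branching — this is the standard phenomenon for such "thin" orbits. Concretely I would: fix the scale $x$; show the primitive quadruples of height $\le n$ injation into lattice points on the quadric surface $Q = $ const inside a ball of radius $n$; and count those lattice points. The number of integer points on a fixed affine quadric surface in $\mathbb{R}^4$ inside a ball of radius $n$ is $O(n^2\log n)$ by standard results (circle-method / divisor-bound estimates for ternary and quaternary quadratic forms). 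Summing over the $O(n)$ admissible values of $x$ and absorbing a further $\log$ from the $\gcd$ sum would give $F(n) = O(n^2 \log^3 n)$, matching the claim. I expect the main obstacle to be making the lattice-point count on the quadric precise with the correct power of $\log n$: one must handle the quadratic form $3\sum x_i^2 - (\sum x_i)^2$ — which has signature $(3,1)$ by the Coxeter theorem above — and the contribution of the non-primitive quadruples via the $x$-sum, being careful that the logarithmic factors multiply out to exactly $\log^3 n$ rather than more.
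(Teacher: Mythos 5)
There is a genuine gap. Your first approach (reduce to a root quadruple and count reduced words of length $O(\log n)$) you correctly abandon yourself: the branching factor $3$ per step overwhelms the logarithmic depth and gives a polynomial of the wrong degree, and nothing in that paragraph is salvaged later. Your second approach then identifies the right reformulation --- count integer points on the quadric cone $3\sum a_i^2-(\sum a_i)^2=0$ inside a ball of radius $n$ --- but at the decisive moment you invoke ``standard results (circle-method / divisor-bound estimates)'' to assert that this count is $O(n^2\log n)$. That assertion \emph{is} the theorem (indeed a slightly stronger form of it than what is claimed), and it is not an off-the-shelf black box for a homogeneous isotropic cone of signature $(3,1)$: the standard representation-number estimates apply to $Q=k$ with $k\neq 0$, and assembling them into a count on the cone is exactly the work that has to be done. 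Your accounting of the logarithms also does not close up: an $O(n^2\log n)$ count on the cone together with the $\gcd$-sum $\sum_{x\le n}(n/x)^2\log(n/x)$, which converges to $O(n^2\log n)$, would give two fewer logs than you claim to need, which signals that the source of the $\log^3$ factor has not actually been located.

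For comparison, the paper carries out precisely the count you defer to a citation. The substitution $x=a$, $y=b$, $z=a+b-c$, $w=a+b-d$ (a unimodular change of variables preserving integrality) turns the condition into $z^2-zw+w^2=3xy$. Setting $k=3xy\le Cn^2$, the number of representations $A(k)$ of $k$ by the Eisenstein norm form $z^2-zw+w^2$ is bounded by $6d(k)$ (by unique factorization in $\mathbb{Z}[\zeta_3]$), and the number of ways to write $k=3xy$ is at most $d(k)$; the total is therefore at most $\sum_{k\le Cn^2}6d(k)^2$, and the elementary estimate $\sum_{k\le N}d(k)^2=O(N\log^3 N)$ yields $O(n^2\log^3 n)$. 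These two lemmas --- the factorization bound $A(k)\le 6d(k)$ and the divisor-square sum --- are the content of the proof, and your proposal does not supply either.
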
 
\begin{proof}
Suppose that $a\geq b\geq c\geq d$ without loss of generality.  Consider the substitution $x=a$, $y=b$, $z=a+b-c$, $w=a+b-d$.  Then the quadratic form $3(a^2+b^2+c^2+d^2)-(a+b+c+d)^2$ can be rewritten as $-6xy+2z^2-2zw+2w^2$, so it suffices to show that the number of zeros of the quadratic form $-3xy+z^2-zw+w^2$ with $x^2+y^2+z^2+w^2\leq Cn^2$ for some constant $C$ is $O(n^2\log^3(n))$.  It is clear that any solution generates a nonnegative quadruple.  Indeed, if $d<0$, then $w>a+b=x+y$, so $z^2-zw+w^2=(z-w/2)^2+(3/4)w^2>(3/4)(x+y)^2\geq 3xy$, a contradiction.  In addition, the integer property is preserved because the transformation matrix $$\begin{pmatrix}1&0&0&0\\0&1&0&0\\1&1&-1&0\\1&1&0&-1\end{pmatrix}$$ has determinant 1.%Consider the basis $v_1=(1, 1, 1, 0)$, $v_2=(1, 1, 0, 1)$, $v_3=(1, 0, 0, 0)$, $v_4=(0, 1, 0, 0)$ of $\mathds{R}^4$.  In this basis, the quadratic form $3(a^2+b^2+c^2+d^2)-(a+b+c+d)^2$ is transformed into $$-6xy+2z^2-2zw+2w^2=2(-3xy+z^2-zw+w^2),$$ so it suffices to show that the number of zeros of the quadratic form $-3xy+z^2-zw+w^2$ with $$\sqrt{x^2+y^2+z^2+w^2}\leq n$$ is $O(n(log(n))^{3})$.

Consider such a zero $(w,\, x,\, y,\, z)$, so $z^2-zw+w^2=3xy\leq Cn^2$, where $C$ is a constant.  Set $3xy=k$.  For a fixed $k$, define $A(k)$ to be the number of integer solutions to the equation $z^2-zw+w^2=k$.  In addition, for a fixed $k$, it is clear that the number of integer solutions to $3xy=k$ is bounded above by $d(k)$, the number of divisors of $k$.  

It follows that the number of zeros of the quadratic form $-3xy+z^2-zw+w^2$ with $x^2+y^2+z^2+w^2\leq Cn^2$ is bounded above by $$\sum_{k\leq Cn^2}A(k)d(k).$$%=\sum\limits_{\substack{m\leq k\leq Cn^2 \\  m|k}}A(k).$$  

\begin{lemma}\label{A(k)}
For any positive integer $k$, $A(k)\leq 6d(k)$.
\end{lemma}

\begin{proof}
Note that the function $A(k)$ is the number of factorizations of $k$ into a product of an integer and its conjugate in the ring $\mathds{Z}[\zeta]$, where $\zeta$ is a cubic root of 1.  This ring has 6 units (the 6th roots of unity).
Define $B(m):=A(m)/6$.  For a prime $p$, if $p\equiv 2\pmod{3}$, then $p$ is still a prime in $\mathds{Z}[\zeta]$, so $B(p^j)=0$ if $j$ is odd, while $B(p^j)=1$ if $j$ is even (unique factorization $p^j=(p^{j/2})(p^{j/2})$ up to units).  If $p=3$ or if $p\equiv 1\pmod{3}$, then $p$ factors uniquely in $\mathds{Z}[\zeta]$ as $q\bar q$, so $p^j$ can only factor as $z\bar z$, where $z=q^i(\bar q)^{j-i}$.  Therefore, there are $j+1$ factorizations up to units, so $B(p^j)=j+1$.  

From these observations, it is straightforward to show that $B(m)$ is multiplicative, implying that if $m$ contains primes congruent to 2 modulo 3 raised to odd powers, then $B(m)=0$, and if not, then $B(m)=(b_1+1)\cdots(b_s+1)$ if $m$ is factored as $p_1^{2a_1}\cdots p_k^{2a_k}q_1^{b_1}\cdots q_s^{b_s}$, where $p_i$ are of the form $3r+2$ and $q_i=3$ or $q_i=3r+1$.  See \cite{Hardy}.   In our case, it suffices to show that $B(k)\leq d(k)$ for $k=p^j$, a prime power.  We have shown above that $B(p^j)=0$, 1, or $j+1$, and since $d(p^j)=j+1$, it follows that $B(p^j)\leq d(p^j)$ in all cases, as desired.  
\end{proof}

By Lemma~\ref{A(k)}, the number of zeros of the quadratic form $-3xy+z^2-zw+w^2$ with $x^2+y^2+z^2+w^2\leq Cn^2$ is bounded above by $$\sum_{k\leq Cn^2}6(d(k))^2.$$  

\begin{lemma}\label{logbounds}
For any positive integer $n$, $\displaystyle\sum_{k=1}^{n}(d(k))^2=O(n\log^{3}n)$.
\end{lemma}

\begin{proof}
Note that 
\begin{align*}
\sum_{k=1}^{n}(d(k))^2&=\sum_{k=1}^{n}{\sum_{\substack{i|k\\j|k}}1}=\sum_{\substack{1\leq i\leq n\\1\leq j\leq n}}{\sum_{\substack{k\leq n\\i|k\\j|k}}1}\leq\sum_{\substack{1\leq i\leq n\\1\leq j\leq n}}\frac{n}{\text{lcm}(i,\, j)}=n\sum_{\substack{1\leq i\leq n\\1\leq j\leq n}}\frac{1}{\text{lcm}(i,\, j)}\\
&=\sum_{d=1}^{n}\frac{n}{d}\sum_{\substack{1\leq i'\leq n/d\\1\leq j'\leq n/d\\\gcd(i',\, j')=1}}\frac{1}{i'j'}\leq\sum_{d=1}^{n}\frac{n}{d}\sum_{\substack{1\leq i'\leq n/d\\1\leq j'\leq n/d}}\frac{1}{i'j'}\\
&=\sum_{d=1}^{n}\frac{n}{d}\left(\sum_{i'=1}^{n/d}\frac{1}{i'}\right)^2\leq Cn\sum_{d=1}^{n}\frac{1}{d}\log^2\left(\frac{n}{d}\right)\\
&\leq Cn\log^{2}n\sum_{d=1}^{n}\frac{1}{d}\leq Cn\log^{3}n,
\end{align*}
as claimed.
\end{proof}

For a generalization of this result, see Formula $(1.80)$ in \cite{Iwaniec}.  Replacing $n$ by $n^2$ in Lemma~\ref{logbounds}, we conclude that the entire expression is bounded by $$\sum_{k\leq Cn^2}6(d(k))^2=O(n^2\log^3(n)),$$ as desired.   
%It is well-known that $A(k)=O(k^{\delta})$ for any $\delta>0$, so setting $k=mr$, we obtain $$\sum\limits_{\substack{m\leq Cn^2 \\  m|k}}A(k)=\sum_{m\leq Cn^2}\sum_{r=1}^{Cn^2/m}(mr)^\delta.$$  The internal sum is approximately $m^{\delta} (Cn^2/m)^{1+\delta}=(Cn^2)^{1+\delta}/m$, so the entire expression is bounded by $$\sum_{m\leq Cn^2}\frac{(Cn^2)^{1+\delta}}{m}\approx(Cn^2)^{1+\delta}\log(Cn^2)=2\log(C)(Cn^2)^{1+\delta}\log(n)=2\log(C)C^{1+\delta}n^{2+2\delta}\log(n),$$ which grows more slowly than $2\log(C)C^{1+\delta}n^{2+3\delta}$. Therefore, $F(n)=O(n^{2+3\delta})$ for any $\delta>0$, and setting $\delta=\epsilon /3$ generates the desired result.
\end{proof}

\subsection{The Number of Triangle Quadruples with Specific Elements}
Suppose we want to calculate the number of triangle quadruples containing the pair of integers $(p,\, q)$, with $p, q>0$.  Consider the substitution $x=p$, $y=q$, $z=p+q-c$, $w=p+q-d$, so the relation $3(p^2+q^2+c^2+d^2)=(p+q+c+d)^2$ can be rewritten as $z^2-zw+w^2=3pq$.  Therefore, it suffices to determine the number of integer solutions to $z^2-wz+z^2=3pq$. The number of solutions is $6B(3pq)$, where $B(m)$ is defined as in the proof of Lemma~\ref{A(k)}.  The closed formula computes the number of solutions and the solutions themselves. 

\subsection{The Number of Triangle Quadruples Bounded by Maximal Element} 
\begin{theorem}\label{maximalelement}
Let $\text{max}(Q)$ denote the maximal element in the  quadruple $Q$.  Then $|\{Q:\text{max}(Q)\leq n\}|=O(n^2\log^3(n))$.
\end{theorem}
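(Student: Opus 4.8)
The plan is to deduce this immediately from Theorem~\ref{height} by a crude but sufficient comparison between the maximal element and the height. First I would observe that if $\max(Q)\le n$ for $Q=(a,\,b,\,c,\,d)$, then each of $a,b,c,d$ is at most $n$, so $a^2+b^2+c^2+d^2\le 4n^2$ and hence $H(Q)=\sqrt{a^2+b^2+c^2+d^2}\le 2n$. This yields the set inclusion $\{Q:\max(Q)\le n\}\subseteq\{Q:H(Q)\le 2n\}$, and therefore $|\{Q:\max(Q)\le n\}|\le F(2n)$, where $F$ is as in Theorem~\ref{height}.

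Next I would invoke Theorem~\ref{height} at the argument $2n$ (which is a legitimate integer input whenever $n$ is), giving $F(2n)=O\big((2n)^2\log^3(2n)\big)$. Since $(2n)^2=4n^2$ and $\log^3(2n)=(\log n+\log 2)^3=O(\log^3 n)$, the constant factors introduced by the doubling are absorbed into the big-$O$, and the right-hand side is $O(n^2\log^3 n)$. This completes the argument.

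I do not expect a genuine obstacle here: all the substantive work — the change of variables reducing the count to zeros of $z^2-zw+w^2=3xy$ with $x^2+y^2+z^2+w^2=O(n^2)$, the bound $A(k)\le 6d(k)$ of Lemma~\ref{A(k)}, and the estimate $\sum_{k\le m}d(k)^2=O(m\log^3 m)$ of Lemma~\ref{logbounds} — was already carried out in the proof of Theorem~\ref{height}. The only point requiring a moment's care is checking that the inclusion above is genuine and that the doubling is harmless, both of which are routine. If a self-contained proof were preferred, I would instead rerun the proof of Theorem~\ref{height} verbatim, changing only the constraint from $H(Q)\le n$ to $\max(a,b,c,d)\le n$; this still forces each variable, and hence the quantity $3xy=z^2-zw+w^2$, to be $O(n^2)$, so the same divisor sum $\sum_{k\le Cn^2}6\,d(k)^2=O(n^2\log^3 n)$ controls the count.
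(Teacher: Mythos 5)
Your argument is correct and is essentially the paper's own proof: the paper likewise observes $\max(Q)\le H(Q)\le 2\max(Q)$ and applies Theorem~\ref{height} at $2n$, absorbing the constants into the big-$O$. Your version, phrased as a clean set inclusion $\{Q:\max(Q)\le n\}\subseteq\{Q:H(Q)\le 2n\}$ with the bound $F(2n)$, is if anything slightly more careful than the paper's use of ``$\sim$'' here.
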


\begin{proof}
Note that $\text{max}(Q)\leq H(Q)\leq 2\text{max}(Q)$, so by Theorem~\ref{height},  $$|\{Q:\text{max}(Q)\leq n\}|\sim|\{Q:H(Q)\leq 2n\}|=O(n^2\log^3(n))$$ as well.
\end{proof}  

\subsection{The Norm of Triangle Quadruples After $n$ Operations}
Define a word $w$ of generators $S_1$, $S_2$, $S_3$, $S_4$ to be a reduced word if it is not equal in $T$ to a word of smaller length.  Let $W$ be the set of all reduced words.  For $n=4m+i$ with $0\leq i\leq 3$, define the reduced word $R_n$ to be $R_i(S_4S_3S_2S_1)^m$, where $R_i=I,\, S_1,\, S_2S_1,\, S_3S_2S_1$ for $0\leq i \leq 3$, respectively. For a matrix $A$, $||A||_\infty$ is defined to be its maximum absolute row sum, so for a column vector $v=(a,\, b,\, c,\, d)^T$, $||v||_\infty=\text{max}(a,\, b,\, c,\, d)$.  Let $r=(0,\, x,\, x,\, x)$ be any root quadruple.   
\begin{theorem}\label{maxgrowth}
For any reduced word $w$ of length $n$, $$||wr||_\infty\leq||R_nr||_\infty=O\left(\left(\frac{1}{4}\left(7+\sqrt{13}+2\sqrt{\frac{75}{2}+\frac{21\sqrt{13}}{2}}\right)\right)^{n/4}\right).$$  
\end{theorem}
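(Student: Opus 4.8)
### Proof Proposal

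The plan is to prove the two parts separately: first the combinatorial claim that $R_n$ maximizes $\|wr\|_\infty$ over all reduced words of length $n$, and then the asymptotic growth rate, which follows by computing the spectral radius of the matrix $M := S_4S_3S_2S_1$.

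\textbf{Step 1: Reducing to a single worst-case word.} I would first establish a monotonicity property of the action of the generators on nonnegative quadruples. Recall from Lemma~\ref{reductionalgorithm} that applying $S_i$ to a triangle quadruple $t=(a,b,c,d)$ replaces the $i$-th coordinate by the sum of the other three minus the $i$-th; crucially, applying the generator corresponding to the \emph{largest} coordinate does not increase the sum, while applying any \emph{other} generator does not decrease it. Starting from the root quadruple $r=(0,x,x,x)$, every reduced word moves away from the root, so in a reduced word $w=S_{i_\ell}\cdots S_{i_1}$ each successive letter (reading right to left) is applied to a coordinate that is \emph{not} currently the unique maximum — otherwise that step would shorten the word, contradicting reducedness. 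I would argue that, coordinatewise, the entrywise-largest result after $n$ steps is obtained by the "greedy" strategy that always enlarges a coordinate as much as possible and cycles through the four coordinates in the fixed order $1,2,3,4$; this is exactly the word $R_n$. The comparison is done by induction on $n$: if $w'r \le wr$ entrywise and both are reached by reduced words of length $n$, then for any legal next letter $S_j$ applied to $w'r$ there is a legal letter applied to $wr$ whose result dominates it, because the new coordinate value is a sum of three old coordinates and sums respect the entrywise order. The bookkeeping of \emph{which} letters are legal at each stage (the non-maximal coordinates) is where care is needed, and I expect this to be the most delicate part of the argument.

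\textbf{Step 2: Computing the growth rate of $R_n$.} Since $R_n = R_i M^m$ with $n=4m+i$ and $R_i$ one of four fixed matrices, we have $\|R_n r\|_\infty = \Theta(\|M^m r\|_\infty)$ up to a bounded multiplicative factor absorbed into the $O(\cdot)$. The matrix $M=S_4S_3S_2S_1$ is a fixed $4\times 4$ integer matrix; I would compute its characteristic polynomial explicitly. One of its eigenvalues is $1$ (the quadratic form $Q$ is preserved and $M$ fixes the all-ones-type invariant direction appropriately), and the remaining cubic factors further because $M$ preserves the signature-$(3,1)$ form from Lemma~\ref{coxetergroup2}, forcing a reciprocal-type structure on the spectrum. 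The dominant eigenvalue $\lambda$ is the largest root, and a direct computation gives
\[
\lambda = \frac{1}{4}\left(7+\sqrt{13}+2\sqrt{\tfrac{75}{2}+\tfrac{21\sqrt{13}}{2}}\right),
\]
so that $\|M^m r\|_\infty = \Theta(\lambda^m) = \Theta(\lambda^{n/4})$, provided $r$ has a nonzero component along the dominant eigenvector — which holds because $r=(0,x,x,x)$ is not fixed by $M$ and lies in the relevant invariant subspace. Combining with Step~1 yields $\|wr\|_\infty \le \|R_n r\|_\infty = O(\lambda^{n/4})$, as stated.

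\textbf{Main obstacle.} The eigenvalue computation in Step~2 is routine (factor out the known eigenvalue $1$, solve the resulting cubic, identify the largest root). The real work is Step~1: rigorously showing that the cyclic greedy word $R_n$ dominates \emph{every} reduced word of the same length in the $\|\cdot\|_\infty$ norm when acting on $r$. The subtlety is that "reduced" is a group-theoretic condition, and I must translate it into the geometric statement that each applied generator acts on a non-maximal coordinate, then set up an entrywise-domination induction that tracks the full quadruple (not just its maximum) so that the inductive hypothesis is strong enough to propagate. An alternative, if the direct induction proves unwieldy, is to use the Coxeter-group structure (Section~\ref{geometricgroups}): reduced words correspond to minimal galleries in the Coxeter complex, and the word $(S_4S_3S_2S_1)^m$ traverses a geodesic in a fixed direction, so one can compare $\|wr\|_\infty$ to $\|R_nr\|_\infty$ via the Tits cone and the fact that $r$ lies in a fixed chamber; but the elementary entrywise argument is more self-contained and I would attempt it first.
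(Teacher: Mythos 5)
Your overall strategy coincides with the paper's: an induction showing that the cyclic word $R_n$ dominates every reduced word of length $n$ acting on $r$, followed by reading off the growth rate from the spectral radius of $S_4S_3S_2S_1$ (whose characteristic polynomial is $1-7t-15t^2-7t^3+t^4$, with largest root $\gamma\approx 8.795$). Step 2 of your proposal is essentially the paper's argument, modulo one false side remark: $1$ is not an eigenvalue of $S_4S_3S_2S_1$ (the polynomial is palindromic, so the spectrum is closed under $t\mapsto 1/t$, but $1-7-15-7+1=-27\neq 0$); this does not affect the method.

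The genuine gap is in Step 1, precisely at the point you flag as "the most delicate part." The induction hypothesis you propose --- sorted entrywise domination $w_i^{(k-1)}\leq r_i^{(k-1)}$ alone --- does not propagate. The problematic case is the new maximum: applying a generator to the coordinate with the \emph{smallest} value produces $w_2^{(k-1)}+w_3^{(k-1)}+w_4^{(k-1)}-w_1^{(k-1)}$, and to bound this by $r_2^{(k-1)}+r_3^{(k-1)}+r_4^{(k-1)}-r_1^{(k-1)}$ you need an inequality on $-w_1^{(k-1)}$ versus $-r_1^{(k-1)}$, which points the \emph{wrong way} relative to $w_1^{(k-1)}\leq r_1^{(k-1)}$. (Concretely, $(1,5,5,5)$ is dominated entrywise by $(4,5,5,5)$, yet the candidate new maxima are $14$ versus $11$.) The paper closes the induction by carrying a second invariant alongside the domination, namely the spread inequality $w_4^{(k)}-w_1^{(k)}\leq r_4^{(k)}-r_1^{(k)}$, and the new maximum is then bounded via $w_2^{(k-1)}+w_3^{(k-1)}+\bigl(w_4^{(k-1)}-w_1^{(k-1)}\bigr)\leq r_2^{(k-1)}+r_3^{(k-1)}+\bigl(r_4^{(k-1)}-r_1^{(k-1)}\bigr)$; a separate two-case analysis (according to whether the new minimum is the old $w_1$ or the old $w_2$) shows the spread invariant itself propagates. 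Without this auxiliary invariant (or an equivalent strengthening), your induction does not go through, so the proposal as written is missing the one essential idea of the proof. A smaller point: your side argument that reducedness forces each letter to act on a non-maximal coordinate is not needed --- the paper's inequalities $w_1^{(k)}\leq w_2^{(k-1)}$, $w_4^{(k)}\leq w_2^{(k-1)}+w_3^{(k-1)}+w_4^{(k-1)}-w_1^{(k-1)}$, etc., hold for an arbitrary generator application --- so you should not rely on translating reducedness into a legality condition on letters.
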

\begin{proof}
Write $w=S_{i_n}S_{i_{n-1}}\cdots S_{i_1}$.  Let $w^n=w r$ and $r^n=R_n r$.  Suppose that the elements of $w^{n}$ and $r^{n}$ are $$w_1^{(n)}\leq w_2^{(n)}\leq w_3^{(n)}\leq w_4^{(n)}\hspace{5mm}\text{and}\hspace{5mm}r_1^{(n)}\leq r_2^{(n)}\leq r_3^{(n)}\leq r_4^{(n)}.$$  We will show that \begin{equation}\label{condition1}w_i^{(n)}\leq r_i^{(n)}\text{ for } 1\leq i\leq4\end{equation} and \begin{equation}\label{condition2}w_4^{(n)}-w_1^{(n)}\leq r_4^{(n)}-r_1^{(n)}\end{equation} by induction on $n$.  The base case with $n=1$ is trivial.  For the inductive hypothesis, assume that the claims hold for $n=k-1$.  To prove that~\eqref{condition1} holds for $n=k$, note that  
\begin{alignat*}{2}
w_1^{(k)}&\leq w_2^{(k-1)}&\leq r_2^{(k-1)}&=r_1^{(k)},\\
w_2^{(k)}&\leq w_3^{(k-1)}&\leq r_3^{(k-1)}&=r_2^{(k)},\\
w_3^{(k)}&\leq w_4^{(k-1)}&\leq r_4^{(k-1)}&=r_3^{(k)},
\end{alignat*}
and
\begin{align*}
w_4^{(k)}&\leq w_2^{(k-1)}+w_3^{(k-1)}+w_4^{(k-1)}-w_1^{(k-1)}\\
&\leq w_2^{(k-1)}+w_3^{(k-1)}+(w_4^{(k-1)}-w_1^{(k-1)})\\
&\leq r_2^{(k-1)}+r_3^{(k-1)}+(r_4^{(k-1)}-r_1^{(k-1)})\\
%&=r_2^{(k-1)}+r_3^{(k-1)}+r_4^{(k-1)}-r_1^{(k-1)}\\
&=r_4^{(k)}.
\end{align*}
To prove that~\eqref{condition2} holds for $n=k$, consider two cases.  If $w_1^{(k)}=w_2^{(k-1)}$, then 
\begin{align*}
w_4^{(k)}-w_1^{(k)}&=[w_2^{(k-1)}+w_3^{(k-1)}+w_4^{(k-1)}-w_1^{(k-1)}]-w_2^{(k-1)}\\
&=w_3^{(k-1)}+[w_4^{(k-1)}-w_1^{(k-1)}]\\
&\leq r_3^{(k-1)}+[r_4^{(k-1)}-r_1^{(k-1)}]\\
&=[r_2^{(k-1)}+r_3^{(k-1)}+r_4^{(k-1)}-r_1^{(k-1)}]-r_2^{(k-1)}\\
&=r_4^{(k)}-r_1^{(k)}.
\end{align*}
If $w_1^{(k)}=w_1^{(k-1)}$, then 
\begin{align*}
w_4^{(k)}-w_1^{(k)}&\leq [w_1^{(k-1)}+w_3^{(k-1)}+w_4^{(k-1)}-w_2^{(k-1)}]-w_1^{(k-1)}\\
&\leq[w_2^{(k-1)}+w_3^{(k-1)}+w_4^{(k-1)}]-w_1^{(k-1)}-w_2^{(k-1)}\\
&\leq [r_2^{(k-1)}+r_3^{(k-1)}+r_4^{(k-1)}-r_1^{(k-1)}]-r_2^{(k-1)}\\
&=r_4^{(k)}-r_1^{(k)}.
\end{align*} 
This completes our induction.  Note that the proof is easily derived from the proof of Theorem 7.1 in \cite{Graham2} with a few modifications.  

To bound $||R_nr||_\infty$, consider the characteristic polynomial of $S_4S_3S_2S_1$, $1-7t-15t^2-7t^3+t^4$.  Note that the growth of $(S_4S_3S_2S_1)^m$ is bounded by $\gamma^{m}$, where $$\gamma=\frac{1}{4}\left(7+\sqrt{13}+2\sqrt{\frac{75}{2}+\frac{21\sqrt{13}}{2}}\right)\approx 8.795$$ is the largest eigenvalue of $S_4S_3S_2S_1$ and the largest root of its characteristic polynomial.   
\end{proof}

\subsection{The Number of Triangle Quadruples Generated After $n$ Operations}
\begin{theorem}\label{numberoftrianglequadruples}
Let $W_n$ be the set of reduced words with length not greater than $n$.  Then $|W_n|=O(\lambda^n)$, where $\lambda=\frac{1+\sqrt{13}}{2}$.
\end{theorem}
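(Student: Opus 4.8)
The plan is to read off the exponential growth rate of $|W_n|$ from the Coxeter presentation of $T$ established in Section~\ref{geometricgroups}: $T$ is the Coxeter group on $S_1,\dots,S_4$ with $S_i^2=I$ and $(S_iS_j)^3=I$ for $i\neq j$ (relations \eqref{coxeter1} and \eqref{coxeter2}). As a warm-up, note that a reduced word can contain neither the factor $S_iS_i$ nor the factor $S_iS_jS_iS_j$: the latter equals $S_jS_i$ inside the dihedral subgroup $\langle S_i,S_j\rangle\cong S_3$ and hence is not reduced. Encoding the last three letters of a word as the state of a finite automaton that forbids these two factors gives a linear recurrence whose transition matrix has Perron eigenvalue $1+\sqrt3$, so $|W_n|=O\big((1+\sqrt3)^n\big)$. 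Since $1+\sqrt3>\lambda$, this crude bound is not enough, but it shows that the right object is the full growth series of the group.

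Next I would compute the growth series $W(t)=\sum_{w\in T}t^{\ell(w)}$ using the standard formula for Coxeter groups, $1/W(t)=\sum_{J}(-1)^{|J|}/W_J(t^{-1})$, where $J$ runs over the subsets of $\{S_1,\dots,S_4\}$ for which the standard parabolic $T_J$ is finite (see [7]). The finite parabolics are: $T_\emptyset=\{I\}$; the four subgroups $\langle S_i\rangle\cong\mathbb{Z}/2$, with $W_J(t)=1+t$; and the six subgroups $\langle S_i,S_j\rangle\cong S_3$, with $W_J(t)=1+2t+2t^2+t^3=(1+t)(1+t+t^2)$. Every rank-$3$ parabolic is the affine group $\widetilde{A}_2$, which is infinite, and $T$ itself is infinite, so neither contributes. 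Substituting and clearing denominators gives
\[
\frac1{W(t)}=1-\frac{4t}{t+1}+\frac{6t^3}{(t+1)(t^2+t+1)}=\frac{3t^3-2t^2-2t+1}{(t+1)(t^2+t+1)},
\]
so $W(t)=\frac{(t+1)(t^2+t+1)}{(t-1)(3t^2+t-1)}$. The singularity of $W$ closest to $0$ is the smallest positive root of the denominator, namely the root $t_0=\frac{\sqrt{13}-1}{6}$ of $3t^2+t-1$ (which is smaller than the root $1$ of $t-1$ and is not cancelled by the numerator). Hence the number of elements of $T$ of length at most $n$ is $O(t_0^{-n})$, and $t_0^{-1}=\frac{6}{\sqrt{13}-1}=\frac{1+\sqrt{13}}{2}=\lambda$; this also matches the small cases $1,4,12,30,\dots$ obtained from the series expansion of $W(t)$.

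The remaining step is to pass from group elements to reduced words. Every reduced word of length $\le n$ represents an element of $T$ of length $\le n$, so $|W_n|$ is bounded above by the number of such elements times the maximal number of reduced expressions of a length-$n$ element; by Tits's theorem the only moves between reduced expressions in $T$ are the length-$3$ braid swaps $S_iS_jS_i\leftrightarrow S_jS_iS_j$. I expect controlling this last factor to be the main obstacle: one must check that, in a rank-$4$ Coxeter diagram, these swaps cannot accumulate in a way that changes the exponential rate. A clean alternative is to invoke Brink--Howlett's theorem that the set of all reduced words in a Coxeter group is a regular language, which gives $|W_n|=O(\mu^n)$ for $\mu$ the Perron root of the accepting automaton, and then to verify $\mu=\lambda$ by comparison with the growth series above. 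Finally, if one only wants to bound the number of \emph{distinct triangle quadruples} obtainable from a root quadruple by at most $n$ operations, that number is at most the number of elements of $T$ of length $\le n$, so the growth-series computation finishes the argument with nothing further to check.
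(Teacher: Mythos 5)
Your computation is correct and arrives at exactly the paper's generating function, but by a genuinely different (and more self-contained) route. The paper simply cites Albar's paper on the four-generator Coxeter group with all $m_{ij}=3$ for the recurrence $G_0=1$, $G_1=4$, $G_2=12$, $G_n=2G_{n-1}+2G_{n-2}-3G_{n-3}$, whose characteristic polynomial $x^3-2x^2-2x+3=(x-1)(x^2-x-3)$ has largest root $\lambda=\frac{1+\sqrt{13}}{2}$, and then sums the geometric-type series; your denominator $3t^3-2t^2-2t+1=(t-1)(3t^2+t-1)$ is the reciprocal polynomial of the same recurrence, and your dominant singularity $t_0=\frac{\sqrt{13}-1}{6}$ gives $t_0^{-1}=\lambda$, so the two computations agree (your expansion $1,4,12,30,\dots$ matches Albar's coefficients). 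What your derivation buys is independence from the citation: you obtain the growth series from the standard finite-parabolic recursion $1/W(t)=\sum_J(-1)^{|J|}/W_J(t^{-1})$, correctly noting that the rank-$3$ parabolics are affine $\widetilde{A}_2$ and hence do not contribute; both routes rely on the faithfulness of the Coxeter presentation established in Section~\ref{geometricgroups}. As for the worry in your last paragraph: it is legitimate given the paper's literal definition of a reduced word as a string, but the paper's own proof explicitly counts ``reduced words of length $n$ generating \emph{distinct matrices},'' i.e.\ $|W_n|$ is the number of group elements of length at most $n$ (and that is how $W_n$ is used in Theorem~\ref{quadruplesandwords}); under that reading your growth-series computation already finishes the proof, and no Tits/Brink--Howlett argument about multiplicities of reduced expressions is needed. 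Your preliminary automaton bound $O((1+\sqrt3)^n)$ is harmless but superfluous.
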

\begin{proof}
By \cite{Albar}, the growth coefficients $G_n$ of the triangle group, computing the number of reduced words with length $n$ generating distinct matrices, are generated by the recurrence $G_0=1$, $G_1=4$, $G_2=12$, and $G_n=2G_{n-1}+2G_{n-2}-3G_{n-3}$ for $n\geq 3$.  The solution for this recurrence is $$G_n=c_1\left(\frac{1-\sqrt{13}}{2}\right)^n+c_2\left(\frac{1+\sqrt{13}}{2}\right)^n=O\left(\left(\frac{1+\sqrt{13}}{2}\right)^n\right).$$  Therefore, if we let $\lambda$ denote $\frac{1+\sqrt{13}}{2}$, then $G_n\sim \lambda^n$.  It follows that $$|W_n|=G_0+G_1+G_2+\cdots+G_n\sim 1+\lambda+\lambda^2+\cdots+\lambda^n=\frac{\lambda^{n+1}-1}{\lambda-1}=O(\lambda^n),$$ as desired.
\end{proof}

\begin{theorem}\label{quadruplesandwords}
For any root quadruple $r$ of the form $(0,\, x,\, x,\, x)$, $c_1\lambda^n/n^2\leq |W_nr|\leq c_2\lambda^n$ for some constants $c_1$ and $c_2$.  That is, $|W_nr|$ and $|W_n|$ differ asymptotically by a factor that is $O(n^2)$.
\end{theorem}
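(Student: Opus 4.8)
The plan is to prove the upper and lower bounds separately; the upper bound is essentially free and all the content lies in the lower bound. For the upper bound, every element of $W_n r$ is $wr$ for some $w\in W_n$, so $|W_n r|\le |W_n|=O(\lambda^n)$ by Theorem~\ref{numberoftrianglequadruples}, which provides the constant $c_2$. For the lower bound I would first record that in fact $|W_n|\ge c_0\lambda^n$ for some $c_0>0$ and all large $n$: in the closed form $G_n=c_1\big(\tfrac{1-\sqrt{13}}{2}\big)^n+c_2\big(\tfrac{1+\sqrt{13}}{2}\big)^n$ from the proof of Theorem~\ref{numberoftrianglequadruples}, the coefficient $c_2$ must be strictly positive, for otherwise the nonnegative unbounded sequence $G_n$ would eventually be governed by $c_1\big(\tfrac{1-\sqrt{13}}{2}\big)^n$, which, since $|(1-\sqrt{13})/2|<\lambda$, either changes sign infinitely often or tends to $-\infty$ along a subsequence; hence $G_n\ge c_0\lambda^n$ eventually and $|W_n|=\sum_{k\le n}G_k\ge c_0\lambda^n$.

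The heart of the proof is to control the failure of injectivity of the surjection $\phi\colon W_n\to W_n r$, $w\mapsto wr$. Fix $q\in W_n r$ and choose $w_0\in W_n$ with $w_0 r=q$. Then $\phi^{-1}(q)=\{w\in W_n: w_0^{-1}w\in\operatorname{Stab}_T(r)\}\subseteq w_0\operatorname{Stab}_T(r)$, and for $w=w_0 s$ with $\ell(w)\le n$ we have $\ell(s)\le\ell(w_0)+\ell(w)\le 2n$; hence $|\phi^{-1}(q)|\le |\{s\in\operatorname{Stab}_T(r):\ell(s)\le 2n\}|$. So it suffices to show that $\operatorname{Stab}_T(r)$ has at most quadratic growth with respect to the generators $S_1,\dots,S_4$, i.e. $|\{s\in\operatorname{Stab}_T(r):\ell(s)\le m\}|\le Cm^2$ for some constant $C$, since then $|\phi^{-1}(q)|\le C(2n)^2=4Cn^2$ for every $q$.

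To see this I would identify $\operatorname{Stab}_T(r)$. Writing $r=(0,x,x,x)$ with $x\ge 1$, a direct computation gives $S_2 r=S_3 r=S_4 r=r$, since $S_i$ (for $i\in\{2,3,4\}$) replaces the $i$-th coordinate $x$ by $(0+x+x)-x=x$ and fixes the others; thus $\operatorname{Stab}_T(r)\supseteq\langle S_2,S_3,S_4\rangle$, the Coxeter group with all $m_{ij}=3$, namely the affine Coxeter group $\widetilde{A}_2$ acting as a crystallographic reflection group on the Euclidean plane. As a standard parabolic subgroup of $T$ its length function is the restriction of $\ell$, and being virtually isomorphic to $\mathbb{Z}^2$ its balls have size $\Theta(m^2)$, so the required bound holds for it. For the reverse inclusion, observe that $Q(r)=3(3x^2)-(3x)^2=0$, so $r$ is isotropic for the form of Lemma~\ref{coxetergroup2} and determines an ideal point of $T$ viewed as a hyperbolic reflection group of finite covolume; its fundamental domain is the regular ideal tetrahedron in $\mathbb{H}^3$ (all dihedral angles $\pi/3$, whence all vertices are ideal), and $r$ is the vertex incident to the mirrors of $S_2,S_3,S_4$. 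By the standard description of cusp stabilizers of cofinite hyperbolic reflection groups, $\operatorname{Stab}_T(r)$ equals the parabolic subgroup generated by the reflections in the faces through that vertex, that is, $\langle S_2,S_3,S_4\rangle$; in particular $|\{s\in\operatorname{Stab}_T(r):\ell(s)\le 2n\}|\le Cn^2$.

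Assembling the pieces, $c_0\lambda^n\le|W_n|=\sum_{q\in W_n r}|\phi^{-1}(q)|\le Cn^2\,|W_n r|$, so $|W_n r|\ge(c_0/C)\lambda^n/n^2$, and shrinking the constant to absorb finitely many small $n$ yields $c_1\lambda^n/n^2\le|W_n r|$. Together with $|W_n r|\le c_2\lambda^n$ this gives the stated inequality, and the final assertion follows since $1\le|W_n|/|W_n r|\le |W_n|/(c_1\lambda^n/n^2)=(n^2/c_1)\cdot(|W_n|/\lambda^n)=O(n^2)$ by $|W_n|=O(\lambda^n)$. I expect the main obstacle to be the exact determination of $\operatorname{Stab}_T(r)$: the inclusion $\langle S_2,S_3,S_4\rangle\subseteq\operatorname{Stab}_T(r)$ and the quadratic growth of $\widetilde{A}_2$ are routine, but excluding any further elements that fix $r$ --- equivalently, confirming that $r$ is a bona fide cusp and that its stabilizer is precisely the obvious parabolic subgroup --- requires input from the geometry of hyperbolic reflection groups beyond the lemmas established here; a purely combinatorial alternative would be to show directly that any reduced word fixing $r$ lies in $\langle S_2,S_3,S_4\rangle$.
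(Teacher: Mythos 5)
Your proof is correct and takes essentially the same approach as the paper: bound the fibers of $w\mapsto wr$ by the number of elements of $\operatorname{Stab}_T(r)$ of length at most $2n$, and use the quadratic growth of that stabilizer (the affine group $\langle S_2,S_3,S_4\rangle\cong \mathbf{S}_3\ltimes\mathds{Z}^2$). The only difference is that you supply justifications the paper omits --- the strict positivity of the leading coefficient in $G_n$ (hence $|W_n|\geq c_0\lambda^n$) and the identification of the full stabilizer via the cusp-stabilizer description --- whereas the paper simply asserts $\operatorname{Stab}_T(r)=T_0$ and cites Albar's growth series $\frac{1+t+t^2}{(1-t)^2}$ for the count $1+3(1+\cdots+2n)\leq kn^2$.
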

\begin{proof}
The upper bound is trivial by Theorem~\ref{numberoftrianglequadruples}.  To prove the lower bound, note that the stabilizer of $r$ in $T$ is $T_0:={\bold S}_3\ltimes \mathds{Z}^2$ (where ${\bold S}_3$ denotes the symmetric group in three items), the subgroup generated by $S_2$, $S_3$, $S_4$. Therefore, if $w$ and $w'\in W_n$ are in the same coset of $T_0$, then $w'=wu$ for some $u\in T_0$. It follows that $u=w^{-1}w'$, so the length of $u$ is at most $2n$.  But by \cite{Albar}, the growth series of $T_0$, which has three generators, is $\frac{1+t+t^2}{(1-t)^2}$, from which it can be computed that the growth coefficients $G_n$ are generated by $G_0=1$ and $G_n=3n$ for $n>0$.  Therefore, the number of elements in $T_0$ of length at most $2n$ is $$1+3(1+2+\cdots+2n)=6n^2+3n+1\leq kn^2,$$ for some constant $k$. For any $c\in W_nr$, define $N(c):=|\{w\in W_n:wr=c\}|$.  Then $$|W_n|=\sum_{c\in W_nr}N(c)\leq|W_nr|kn^2,$$ so $|W_nr|\geq |W_n|/kn^2$, which implies the lower bound.

%It follows that the set of $w\in W_n$ such that $wr=x$ has length at most $2n$ contains no greater than $kn^2$ elements for each $x$. Combined with Theorem~\ref{numberoftrianglequadruples}, this implies the lower bound, $c_1\lambda^n/n^2\leq |W_nr|$.  
\end{proof}

\subsection{Elements in Triangle Quadruples}
Let $O(4):=O(4,\, A,\, \mathds{C})$, the group of matrices that preserve the quadratic form $A$.  Similarly, let $SO(4):=SO(4,\, A,\, \mathds{C})$.
\begin{theorem}
For every triangle quadruple $Q=(a,\, b,\, c,\, d)$, define $\alpha(Q)$ to be the number of prime factors (with multiplicities) in $a\cdot b\cdot c\cdot d$.  There are infinitely many $Q$ such that $\alpha(Q)\leq k$ for some constant $k$. 
\end{theorem}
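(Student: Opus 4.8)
The plan is to bypass the group action entirely and simply exhibit explicit infinite families of triangle quadruples $(a,b,c,d)$ on which the length of the factorization of $a\cdot b\cdot c\cdot d$ is controlled.

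The quickest route uses the family $Q_p=(3p,\,p,\,p,\,p)$ with $p$ ranging over the primes. One checks directly that each $Q_p$ is a triangle quadruple — here $a^2+b^2+c^2+d^2=12p^2$ while $(a+b+c+d)^2=(6p)^2=36p^2$ — and distinct primes clearly give distinct quadruples (geometrically this is the centroid configuration scaled by $p$). Since $abcd=3p^4$ we get $\alpha(Q_p)=\Omega(3p^4)=5$ for \emph{every} prime $p$, where $\Omega$ counts prime factors with multiplicity. Euclid's theorem then supplies infinitely many $Q$ with $\alpha(Q)\le 5$, so the statement holds with $k=5$ with essentially no obstacle.

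To obtain the same conclusion with $Q$ restricted to \emph{primitive} triangle quadruples — where there is no common factor to absorb the growth — I would instead use $Q_s=(3s^2,\,1,\,3s^2-3s+1,\,3s^2-3s+1)$ for $s\ge 1$. This is again a triangle quadruple by a one-line check (one has $a+b+c+d=3(3s^2-2s+1)$), it is primitive because one entry equals $1$, and the $Q_s$ are pairwise distinct. Since $\Omega$ is completely additive,
$$\alpha(Q_s)=\Omega\!\left(3\,s^2\,(3s^2-3s+1)^2\right)=1+2\,\Omega(s)+2\,\Omega(3s^2-3s+1),$$
so it suffices to produce infinitely many $s$ for which the value of the cubic $f(s)=s(3s^2-3s+1)$ has a bounded number of prime factors. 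Now $f$ is a product of two distinct irreducible polynomials, of degrees $1$ and $2$, and it has no fixed prime divisor: $\#\{s\bmod p:p\mid f(s)\}\le 3$ for every prime $p$, which is $<p$ once $p\ge 5$, while $3s^2-3s+1\equiv 1$ modulo both $2$ and $3$ forces $\#\{s\bmod p:p\mid f(s)\}=1$ for $p\in\{2,3\}$. Hence a weighted sieve (Richert's weights; see Halberstam--Richert, Diamond--Halberstam--Richert, or \cite{Iwaniec}) yields a constant $R$ such that $f(s)$ is a $P_R$-number for infinitely many $s$. Since $\gcd(s,\,3s^2-3s+1)=1$, for those $s$ we have $\Omega(s)+\Omega(3s^2-3s+1)\le R$ and therefore $\alpha(Q_s)\le 1+2R$, which proves the statement (even for primitive $Q$) with $k=1+2R$.

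The only genuine difficulty, and only in this primitive version, lies in the last step: one must invoke and correctly set up an almost-primes-in-a-polynomial-sequence theorem, being careful that we claim merely that the \emph{product} $s(3s^2-3s+1)$ is almost-prime — demanding the two factors be simultaneously prime would amount to an unproven prime-tuples statement, whereas the weighted sieve applied to the product is unconditional. Everything else in either argument reduces to a direct substitution into the defining equation.
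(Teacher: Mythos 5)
Your proposal is correct, but it proves the theorem by a genuinely different and far more elementary route than the paper. The paper invokes Theorem 3 of the Golsefidy--Sarnak affine sieve, and essentially all of its work goes into the hypothesis of that theorem: a Lie-algebra computation showing that the Zariski closure of the triangle group is $O(4)$, hence semisimple. You instead exhibit explicit one-parameter families. Your non-primitive family $(3p,p,p,p)$ is a valid triangle quadruple (the scaled centroid configuration, which reduces to the root quadruple $(0,p,p,p)$ under $S_1$), and it settles the theorem \emph{as literally stated} with the explicit constant $k=5$ and nothing beyond Euclid --- which correctly exposes that the statement, read without a primitivity or orbit restriction, is nearly trivial. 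Your primitive family $(3s^2,\,1,\,3s^2-3s+1,\,3s^2-3s+1)$ checks out (both sides of the defining equation equal $9(3s^2-2s+1)^2$), and the reduction to showing $s(3s^2-3s+1)$ is a $P_R$-number infinitely often is a correct application of the Richert-type weighted sieve: the two factors are distinct irreducibles, coprime as integers, and your fixed-divisor analysis ($\rho(p)\le 3<p$ for $p\ge 5$, $\rho(2)=\rho(3)=1$) is right. The trade-off is worth noting: your construction yields infinitely many qualifying quadruples lying on a single curve in the orbit, whereas the affine sieve gives the much stronger saturation statement that the quadruples with $\alpha(Q)\le k$ are Zariski dense in the orbit's closure; on the other hand, your argument replaces the full expansion/affine-sieve machinery with a classical almost-primes-in-polynomials theorem and, in the non-primitive reading, with nothing at all.
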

\begin{proof}
We will invoke Theorem 3 from \cite{Golsefidy}, which states that the result is true if the Zariski
 closure $\overline{T}$ of $T$ in the complex algebraic group $GL(4,\, \mathds{C})$ is a semisimple algebraic group.  By this theorem, it suffices to verify that $\overline{T}$ is semisimple.
 
\begin{lemma}\label{zariskiclosure}
The Zariski closure of the triangle group, $\overline{T}$, is $O(4)$.
\end{lemma}
\begin{proof}
Consider the subgroup $T_{123}={\bold S}_3\ltimes \mathds{Z}^2$ (where ${\bold S}_3$ denotes the symmetric group in three items) generated by $S_1$, $S_2$, $S_3$, containing the translation matrix $$A_1=S_1S_2S_1S_3=\begin{pmatrix}2&1&-2&4\\1&2&-2&4\\1&1&-1&1\\0&0&0&1\end{pmatrix}.$$  It is straightforward to verify that $$A_1^{n}=\begin{pmatrix}1+n&n&-2n&3n^2+n\\n&n+1&-2n&3n^2+n\\n&n&1-2n&6n^2-2n\\0&0&0&1\end{pmatrix}.$$  Let $$B_1={d\over dn}\bigg|_{n=0}A_1^n=\begin{pmatrix}1&1&-2&1\\1&1&-2&1\\1&1&-2&-2\\0&0&0&0\end{pmatrix}.$$  

It follows that $B_1\in\text{Lie}(\overline{T})$, along with the conjugates of $B_1$ by $T$ and their brackets.  It is straightforward to verify that the six matrices 
$$S_1B_1S_1^{-1}=\left(\begin{rsmallmatrix}-1&2&-1&-1\\-1&2&-1&2\\-1&2&-1&-1\\0&0&0&0\end{rsmallmatrix}\right),\hspace{1mm}S_2B_1S_2^{-1}=\left(\begin{rsmallmatrix}2&-1&-1&2\\2&-1&-1&-1\\2&-1&-1&-1\\0&0&0&0\end{rsmallmatrix}\right),$$
$$(S_1B_1S_1^{-1})(S_4B_1S_4^{-1})-(S_4B_1S_4^{-1})(S_1B_1S_1^{-1})=\left(\begin{rsmallmatrix}3&-6&12&-6\\12&3&-6&-6\\-6&12&3&-6\\0&0&0&-9\end{rsmallmatrix}\right),$$
$$(S_2B_1S_2^{-1})(S_4B_1S_4^{-1})-(S_4B_1S_4^{-1})(S_2B_1S_2^{-1})=\left(\begin{rsmallmatrix}3&12&-6&-6\\-6&3&12&-6\\12&-6&3&-6\\0&0&0&-9\end{rsmallmatrix}\right),$$
$$[S_1S_4B_1(S_1S_4)^{-1}](S_4B_1S_4^{-1})-(S_4B_1S_4^{-1})[S_1S_4B_1(S_1S_4)^{-1}]=\left(\begin{rsmallmatrix}30&12&12&-24\\12&-6&-6&-6\\12&-6&-6&-6\\36&-18&-18&-18\end{rsmallmatrix}\right),$$
$$[S_2S_4B_1(S_2S_4)^{-1}](S_4B_1S_4^{-1})-(S_4B_1S_4^{-1})[S_2S_4B_1(S_2S_4)^{-1}]=\left(\begin{rsmallmatrix}-6&12&-6&-6\\12&30&12&-24\\-6&12&-6&-6\\-18&36&-18&-18\end{rsmallmatrix}\right)$$ are linearly independent.  These six linearly independent elements of $\text{Lie}(\overline{T})$ determine a basis, implying that they span $\text{Lie}(SO(4))$ (with dimension 6), which equals $\text{Lie}(O(4))$, so $\text{Lie}(O(4))\subset\text{Lie}(\overline{T})$.  In addition, by Lemma~\ref{coxetergroup2}, $\overline{T}\subset O(4)$, so $\text{Lie}(\overline{T})\subset\text{Lie}(O(4))$.  Therefore, $\text{Lie}(\overline{T})=\text{Lie}(O(4))$.  This implies that $\overline{T}$ is either $SO(4)$ or $O(4)$.  However, $\overline{T}$ cannot be $SO(4)$ because $T$ contains matrices of determinant $-1$ ($S_1$, for example), so $\overline{T}=O(4)$, as desired.
\end{proof}
It is well-known that $O(4)$ is semisimple, and this completes our proof. 
\end{proof}  

\section{Extension to Higher Dimensions}\label{higherdimensions}
Lemma (a) in \cite{Gregorac} proves the following result, which extends the equation for triangle quadruples to higher dimensions. Here, we present an alternative proof. 
\begin{theorem}
For a simplex $T$ in $n$ dimensions and an arbitrary point $P$, if we let $a_0,\, a_1,\, \ldots,\, a_{n+1}$, denote the square of the side length of $T$ and the squares of the distances from $P$ to the vertices of $T$, then $a_0,\, a_1,\, \ldots,\, a_{n+1}$ satisfy $(n+1)(a_0^2+a_1^2+\cdots +a_{n+1}^2)=(a_0+a_1+\cdots +a_{n+1})^2$.  
\end{theorem}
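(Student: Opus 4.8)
The plan is to exploit the symmetry of the regular $n$-simplex by placing it with its centroid at the origin and computing everything in terms of inner products. First I would fix a congruent copy of $T$, say the standard model $v_i=e_i-\tfrac1{n+1}\mathbf 1\in\mathbb R^{n+1}$ for $1\le i\le n+1$, where $e_i$ is a standard basis vector and $\mathbf 1=(1,\dots,1)$. Then $\sum_i v_i=0$, and by the symmetry of this configuration $|v_i|^2=r^2$ is independent of $i$ while $v_i\cdot v_j=c$ is independent of the distinct pair $\{i,j\}$. Dotting $\sum_i v_i=0$ with itself gives $0=(n+1)r^2+(n+1)nc$, hence $c=-r^2/n$, and therefore $a_0=|v_i-v_j|^2=2r^2-2c=\tfrac{2(n+1)}{n}r^2$, i.e.\ $r^2=\tfrac{n}{2(n+1)}a_0$. (The asserted equation is homogeneous of degree $4$ in lengths, so using a congruent copy loses nothing; this is pure bookkeeping.)

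Next I would express $P$ through the $v_i$. Since $T$ is full-dimensional in the ambient $\mathbb R^n$, $P$ lies in the linear span of the $v_i$ once the centroid is the origin, so we may write $P=\sum_j\lambda_j v_j$, and because $\sum_j v_j=0$ we may further arrange $\sum_j\lambda_j=0$. A one-line computation using $v_i\cdot v_j=-r^2/n$ then gives $P\cdot v_i=\tfrac{(n+1)r^2}{n}\lambda_i$, hence $|P|^2=\sum_j\lambda_j(P\cdot v_j)=\tfrac{(n+1)r^2}{n}\sum_j\lambda_j^2$, and consequently $\sum_i(P\cdot v_i)=0$ and $\sum_i(P\cdot v_i)^2=\tfrac{(n+1)r^2}{n}\,|P|^2$. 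Writing $a_i=|P-v_i|^2=|P|^2+r^2-2\,P\cdot v_i$ for $1\le i\le n+1$ and abbreviating $s=|P|^2+r^2$, these identities yield $\sum_{i=1}^{n+1}a_i=(n+1)s$ and $\sum_{i=1}^{n+1}a_i^2=(n+1)s^2+4\sum_i(P\cdot v_i)^2=(n+1)s^2+\tfrac{4(n+1)r^2}{n}|P|^2$.

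Finally I would substitute into the two sides of the claimed identity. The right-hand side becomes $(a_0+(n+1)s)^2=a_0^2+2(n+1)a_0 s+(n+1)^2s^2$, and $(n+1)(a_0^2+\sum a_i^2)=(n+1)a_0^2+(n+1)^2s^2+\tfrac{4(n+1)^2r^2}{n}|P|^2$. The $(n+1)^2s^2$ terms cancel, and after expanding $s=|P|^2+r^2$ and using $r^2=\tfrac{n}{2(n+1)}a_0$ the remaining terms agree identically: the coefficient of $|P|^2$ on each side equals $\tfrac{4(n+1)^2r^2}{n}=2(n+1)a_0$, and the $P$-free parts both reduce to $a_0^2+\tfrac{4(n+1)^2}{n}r^4$. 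This completes the proof.

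The only genuine subtlety — the ``hard part,'' such as it is — is the placement step: one must know that in a regular simplex the centroid is equidistant from the vertices and that distinct vertices have a common inner product about that centroid, which is what makes $r^2$ and $c$ well defined. This is cleanest to verify from the explicit symmetric model $v_i=e_i-\tfrac1{n+1}\mathbf 1$ rather than by invoking the symmetry group abstractly. One should also record why $P$ may be taken in the affine hull of $T$: this is automatic when $T$ is full-dimensional in $\mathbb R^n$, but if $P$ were displaced orthogonally to the hull, every $a_i$ with $i\ge1$ would increase by the same positive constant while $a_0$ would not, and a direct check (already visible in the planar case $n=2$) shows the identity then fails. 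Everything else is routine expansion.
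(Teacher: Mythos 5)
Your proof is correct, and it takes a genuinely different route from the paper. You place the centroid of the regular simplex at the origin, use the explicit symmetric model $v_i=e_i-\tfrac1{n+1}\mathbf 1$ to pin down $|v_i|^2=r^2$ and $v_i\cdot v_j=-r^2/n$, expand $P$ in the $v_i$ with coefficients summing to zero, and verify the identity by direct computation of $\sum a_i$ and $\sum a_i^2$; every step I checked (the relations $r^2=\tfrac{n}{2(n+1)}a_0$, $P\cdot v_i=\tfrac{(n+1)r^2}{n}\lambda_i$, $\sum_i(P\cdot v_i)=0$, $\sum_i(P\cdot v_i)^2=\tfrac{(n+1)r^2}{n}|P|^2$, and the final cancellation) is right. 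The paper instead places $P$ at the origin and exploits the linear dependence of the $n+1$ vertex vectors in $\mathbb{R}^n$: the Gram matrix $G$ with $G_{ij}=a_i+a_j-a_0$ ($i\neq j$), $G_{ii}=2a_i$ must have vanishing determinant, and since $G-a_0I$ is a rank-two operator its characteristic polynomial can be computed from a $2\times2$ block, yielding $\det(G)=a_0^{n-1}\bigl[(\sum a_i)^2-(n+1)\sum a_i^2\bigr]=0$. Your argument is more elementary (no determinants or rank computations) and sidesteps the factor $a_0^{n-1}$, which in the paper's version forces one to note that $a_0\neq 0$ for a nondegenerate simplex before dividing; the paper's argument is more structural, isolating the single geometric input (linear dependence of $n+1$ vectors in $\mathbb{R}^n$) and making clear that the identity is exactly the vanishing of a Gram determinant. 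Your closing observations about why $P$ must lie in the affine hull, and why this is automatic for a full-dimensional simplex, are apt and address a hypothesis the paper leaves implicit.
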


\begin{proof}
Consider a point $P$ inside a simplex in $n$ dimensions, configured so that $P$ is at the origin. Then the vertices of the tetrahedron are the vectors $v_1,\, \ldots,\, v_{n+1}$. Set $a_0=|v_i-v_j|^2$ and $a_i=|v_i|^2$, so $2\langle v_i,\, v_j\rangle=a_i+a_j-a_0$. Since $v_i\in\mathds{R}^n$ and there are $n+1$ of them, they are linearly dependent, which implies that the Gram determinant (the determinant of the matrix with entries $\langle v_i,\, v_j\rangle$) is zero.  Therefore, $\det(G)=0$, where $G_{ij}=a_i+a_j-a_0$, for $i,\, j=1, \ldots,\, n+1$, $i\neq j$, and $G_{ii}=2a_i$. Let ${\bold a}$ be the vector with entries $a_i$, for $i=1,\, \ldots,\, n+1$, and let ${\bold u}$ be the vector whose entries all are 1. Then, for any column vector $x=(x_1,\, \ldots,\, x_{n+1})\in\mathds{R}^{n+1}$, we have $(G-a_0I)x=\langle {\bold u},\, x\rangle {\bold a}+\langle {\bold a}-a_0{\bold u},\, x\rangle {\bold u}$.  Since the image of $G-a_0I$ has dimension 2, it follows that $G-a_0I$ has rank 2 and consequently, an $n-1$-dimensional kernel $K$. Set $e_1={\bold a}$, $e_2={\bold u}$, and select a basis $e_3,\, \ldots,\, e_{n+1}$ of $K$. In this basis, all the rows of the matrix $G-a_0I$ are zero, except the first two, and the upper left $2\times 2$ block contains the elements
$$\begin{matrix}
\langle {\bold u},\, {\bold a}\rangle &\langle {\bold u},\, {\bold u}\rangle\\
\langle {\bold a}-a_0{\bold u},\, {\bold a}\rangle &\langle {\bold a}-a_0{\bold u},\, {\bold u}\rangle.
\end{matrix}$$
The characteristic polynomial of $G-a_0I$ is thus the characteristic polynomial of this $2\times 2$ matrix multiplied by $z^{n-1}$, or $$(z^2-\langle 2{\bold a}-a_0{\bold u},\, {\bold u}\rangle z+\langle {\bold u},\, {\bold a}\rangle\langle {\bold a}-a_0{\bold u},\, {\bold u}\rangle-\langle {\bold u},\, {\bold u}\rangle\langle {\bold a}-a_0{\bold u},\, {\bold a}\rangle)z^{n-1}.$$  The characteristic polynomial of $G$ is obtained from this by replacing $z$ by $z-a_0$, or $$((z-a_0)^2-\langle 2{\bold a}-a_0{\bold u},\, {\bold u}\rangle z+\langle {\bold u},\, {\bold a}\rangle\langle {\bold a}-a_0{\bold u},\, {\bold u}\rangle-\langle {\bold u},\, {\bold u}\rangle\langle {\bold a}-a_0{\bold u},\, {\bold a}\rangle)(z-a_0)^{n-1}.$$
Therefore, substituting $z=0$, we calculate 
\begin{align*}
\det(G)&=(-1)^{n+1}\det(-G)\\
&=(-1)^{n+1}(-a_0)^{n-1}(a_0^2+a_0\langle 2{\bold a}-a_0{\bold u},\, {\bold u}\rangle +\langle {\bold u},\, {\bold a}\rangle \langle {\bold a}-a_0{\bold u},\, {\bold u}\rangle-\langle{\bold u},\, {\bold u}\rangle\langle {\bold a}-a_0{\bold u},\, {\bold a}\rangle)\\
&=a_0^{n-1}(a_0^2+a_0\langle 2{\bold a}-a_0{\bold u},\, {\bold u}\rangle +\langle {\bold u},\, {\bold a}\rangle \langle {\bold a}-a_0{\bold u},\, {\bold u}\rangle-\langle{\bold u},\, {\bold u}\rangle\langle {\bold a}-a_0{\bold u},\, {\bold a}\rangle)\\
&=a_0^{n-1}[(a_0+a_1+\cdots+a_{n+1})^2-(n+1)(a_0^2+a_1^{2}+\cdots+a_{n+1}^2)].
\end{align*}
Since $\det(G)=0$, we conclude $$(a_0+a_1+\cdots+a_{n+1})^2-(n+1)(a_0^2+a_1+\cdots+a_{n+1}^2)=0,$$ as desired.
\end{proof}
 
Note that the general relation simplifies to $$na_{n+1}^2-2a_{n+1}\sum_{i=0}^{n}a_i+(n+1)\sum_{i=0}^{n}a_i^2-\left(\sum _{i=0}^{n}a_i\right)^2=0,$$ so we can reduce as follows $$(a_0,\, a_1,\, \ldots,\, a_{n+1})\rightarrow(a_0,\, a_1,\, \ldots,\, \frac{2}{n}\sum_{i=0}^{n}a_i-a_{n+1}).$$  Notice that in dimensions $n>2$, the analogous operations do not preserve the integer property of the elements.

\section{Open Questions}\label{openquestions}
1. Beginning with a specific root quadruple, is it possible to calculate the asymptotics of the average value of the maximum element in the triangle quadruple obtained after $n$ operations as $n$ goes to infinity?\\
\\
This problem follows naturally from our discussions of the number of triangle quadruples characterized by maximal elements and by height, as well as the growth rate of the triangle group.  Intuitively, the average value of the maximal element after $n$ operations should grow exponentially.\\
\\ 
2. Given a triangle quadruple, is it possible to calculate how many reduced words of minimal possible length can generate it?\\
\\
This is related to the previous question in that it requires an analysis of the properties of triangle quadruples after $n$ operations.

\section{Acknowledgments}
We thank Prof.~R.~Stanley at MIT for suggesting this project and for answering our questions, as well as various other professors at MIT for discussing aspects of the project.  We also thank Prof.~P.~Etingof for assistance in proving the theorems in Section~\ref{properties} and both him and Dr.~T.~Khovanova for reviewing this paper and sharing their insights.  Finally, we thank the PRIMES program, without which this project would not have been possible.
%\newpage
%\section{Bibliography}

\end{document}